\documentclass[10pt,a4paper,fleqn]{article}

\usepackage[paper=a4paper,left=25mm,right=25mm,top=25mm,bottom=25mm]{geometry}
\usepackage[utf8]{inputenc}
\usepackage[english]{babel}
\usepackage[T1]{fontenc}
\usepackage{lmodern}
\usepackage{amsmath,amssymb,amsfonts,amsthm,mathtools}
\usepackage{mathrsfs}
\usepackage{dsfont}
\usepackage{bm}
\usepackage{graphicx}
\usepackage{booktabs}
\usepackage{enumitem}
\usepackage[numbers]{natbib}
\usepackage{microtype}
\usepackage{xcolor}
\usepackage{url}
\usepackage[hidelinks]{hyperref}

\allowdisplaybreaks

\newcommand{\E}{\mathbb{E}}

\newcommand{\R}{\mathbb{R}}

\newcommand{\cC}{\mathcal{C}}

\newcommand{\cA}{\mathcal{A}}

\newcommand{\cU}{\mathcal{U}}

\newcommand{\eqd}{\stackrel{\mathrm{d}}{=}}

\newcommand{\1}{\mathds{1}}
\newcommand{\de}{\mathrm{\,d}}

\DeclarePairedDelimiter{\abs}{\lvert}{\rvert}

\newtheorem{theorem}{Theorem}[section]
\newtheorem{proposition}[theorem]{Proposition}
\newtheorem{corollary}[theorem]{Corollary}

\theoremstyle{definition}
\newtheorem{definition}[theorem]{Definition}
\theoremstyle{plain}
\newtheorem{lemma}[theorem]{Lemma}
\theoremstyle{definition}

\theoremstyle{remark}
\newtheorem{remark}[theorem]{Remark}

\title{Copulas farthest from independence \\ in quadratic Wasserstein distance}

\author{Jonathan Ansari\\[0.45em]
\small Department of Mathematics, Paris Lodron University of Salzburg, Austria\\
\small \texttt{jonathan.ansari@plus.ac.at}}

\date{September 19, 2026}

\begin{document}
\maketitle

\begin{abstract}
Let \(\Pi\) denote the independence copula and \(M,W\) the upper and lower Fr\'echet--Hoeffding copulas. \citet{Catalano-Lavenant-2025} conjectured that \(M\) and \(W\) maximize the quadratic Wasserstein distance from \(\Pi\) among all bivariate copulas. We prove this conjecture and characterize all equality cases:
\(\mathcal{W}_2^2(C,\Pi)\leq \frac{1}{10}\), with equality if and only if \(C\in\{M,W\}\). We also determine explicitly the optimal Monge map from \(\Pi\) to \(M\). The proof is constructive and combines the optimal transport from independence to the diagonal, a sharp convex-order inequality for one-Lipschitz functions, the conditional convex order, and a coupling construction based on conditional comonotonicity and the supermodular order. As a consequence, we obtain a normalized Wasserstein-based dependence measure that characterizes independence and attains its maximal value exactly for comonotone and countermonotone dependence.
\end{abstract}

\medskip
\noindent\textbf{Keywords.} Conditional comonotonicity; conditional convex order; copula; optimal transport; supermodular order; Wasserstein distance.

\medskip
\noindent\textbf{MSC 2020.} 49Q22, 60E15, 62H05, 62H20.

\section{Introduction}\label{sec:intro}

For probability measures with prescribed marginals, the Wasserstein distance from the corresponding product measure provides a natural notion for measuring dependence; see, e.g., \citep{Mordant-Segers-2022,Catalano-Lavenant-2025,DeKeyser-Gijbels-2025}. In the simplest continuous setting, let \(\lambda\) be the uniform probability measure on \([0,1]\) and identify bivariate copulas with the couplings \(\Gamma(\lambda,\lambda)\). \citet[Remark~3]{Catalano-Lavenant-2025} asked which copulas are farthest from the independence copula \(\Pi(u,v) = uv =\lambda\otimes\lambda([0,u]\times [0,v])\) in quadratic Wasserstein distance for the Euclidean ground metric, and conjectured that the diagonal and antidiagonal couplings are extremal. \citet{Schrott-2026} recently proved the corresponding Gaussian problem in arbitrary dimension and explicitly recorded the uniform case as open.

Our main result, Theorem \ref{thm:main}, settles the bivariate uniform conjecture and, in addition, determines all maximizers. 
As a consequence, we define a Wasserstein-based dependence measure that characterizes independent and perfect monotone dependence of two random variables \(X\) and \(Y\); see Corollary \ref{cor:main1}. 
In Corollary \ref{cor_main}, we determine the optimal transport map from \(\Pi\) to \(M\), that we illustrate in Figure \ref{fig:Monge_Pi_M}.

\subsection{Main result}
We denote by \(M(u,v) := \min\{u,v\}\) and \(W(u,v) := \max\{u+v-1,0\}\) the upper and lower Fr\'{e}chet Hoeffding copulas, which distribute mass uniformly on the diagonal \(u=v\) and the anti-diagonal \(u+v=1\), respectively. We write \(\cC\) for the set of bivariate copulas and recall that every copula \(C\) is the distribution function associated with a coupling \(\pi\in\Gamma(\lambda,\lambda)\), that is, \(C(u,v) = \pi([0,u]\times [0,v])\) for all \((u,v)\in [0,1]^2\).
Conversely, every copula \(C\in \cC\) induces a coupling in \(\Gamma(\lambda,\lambda)\) that we denote as \(\pi_C\).
The Wasserstein distance with quadratic Euclidean cost is then defined by 
\begin{align} 
\label{def_W1}
\mathcal{W}_2^2(C,D) &:= \inf_{\gamma\in\Gamma(\pi_C,\pi_D)} \int_{[0,1]^2\times[0,1]^2} \|x-y\|_2^2\,\de\gamma(x,y)\\
\label{def_W2}
&\phantom{:}= \inf_{\substack{(X,Y)\sim C\\(X',Y')\sim D}} \E\bigl[(X-X')^2+(Y-Y')^2\bigr]. 
\end{align} 
In \eqref{def_W1}, the infimum is taken over all couplings of \(\pi_C\) and \(\pi_D\). In \eqref{def_W2}, the infimum is taken over all random vectors \((X,Y,X',Y')\) with \((X,Y)\sim C\) and \((X',Y')\sim D\).

\begin{theorem}[Farthest copulas from independence]\label{thm:main}
For every bivariate copula \(C\), we have
\begin{align}\label{eq:main_intro}
 \mathcal{W}_2^2(C,\Pi)\le \frac1{10},
\end{align}
with equality if and only if \(C\in \{M,W\}\).
Consequently,
\[
 \sup_{C\in\cC}\mathcal{W}_2(C,\Pi) = \mathcal{W}_2(M,\Pi) = \mathcal{W}_2(W,\Pi) =\frac1{\sqrt{10}}.
\]
\end{theorem}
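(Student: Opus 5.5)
The plan is to turn the maximization of a distance into the minimization of a correlation. Write \((X',Y')=(U,V)\) with \(U,V\) i.i.d.\ uniform. Since all four second moments are fixed and equal to \(\tfrac13\), the definition \eqref{def_W2} gives
\[
\mathcal{W}_2^2(C,\Pi)=\tfrac43-2\sup\E[XU+YV],
\]
where the supremum runs over all couplings of \((X,Y)\sim C\) with independent uniforms \((U,V)\). Hence \eqref{eq:main_intro} is equivalent to the following claim: for every \(C\) there is a coupling with \(\E[XU+YV]\ge \tfrac{37}{60}\).

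The first step is to compute the value at \(M\). I pass to rotated coordinates \(S=(U+V)/2\) and \(\Delta=(U-V)/2\). Transporting \(\Pi\) to the diagonal splits into two parts. One is the orthogonal projection onto the diagonal, with cost \(\tfrac12\E[(U-V)^2]=\tfrac1{12}\). The other is the one-dimensional monotone transport of the triangular law of \(S\) onto the uniform law, \(T=F_{\mathrm{unif}}^{-1}\circ F_S(S)\). This should give \(2\,\mathcal{W}_2^2(\mathcal L(S),\lambda)=\tfrac1{60}\), hence \(\tfrac1{10}\) in total, together with the explicit Monge map of Corollary~\ref{cor_main}. The case of \(W\) follows by the reflection \(v\mapsto 1-v\), which preserves \(\Pi\).

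For a general \(C\) I would build a coupling in the same rotated coordinates, namely
\[
\E[XU+YV]=\E[(X+Y)S]+\E[(X-Y)\Delta].
\]
The construction has two parts.
\begin{itemize}[leftmargin=2em]
\item Choose \(S\) comonotone with \(A:=X+Y\).
\item Given \(S\), choose \(\Delta\) conditionally comonotone with \(B:=X-Y\). This is possible because \(\Delta\) is conditionally symmetric given \(S\), with conditional law uniform on \([-\min(S,1-S),\min(S,1-S)]\).
\end{itemize}
The first term is then \(\int_0^1F_A^{-1}F_S^{-1}\), and the second is a conditional rearrangement integral.

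The key difficulty is the trade-off between these two terms. By the supermodular order, \(A\le_{cx}2T\), so the first term is largest for \(M\). The second term vanishes for \(M\) and is largest for \(W\), where \(A\equiv 1\). So neither term can be bounded separately. What I would try is to use that \(x\mapsto\) (the conditional half-width of \(\Delta\) given \(S=x\)) is \(1\)-Lipschitz, together with a sharp convex-order inequality for \(1\)-Lipschitz functions. This should let me lower-bound the \(\Delta\)-term by a functional of the conditional law of \(|B|\) given \(A\). I would then use the constraint that \(X=(A+B)/2\) and \(Y=(A-B)/2\) are both uniform, which means that when \(A\) is less dispersed, \(|B|\) must be more dispersed in conditional convex order. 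Combining these should show that the sum is minimized at the extreme configurations \(A=2T\) or \(A\equiv1\), each giving exactly \(\tfrac{37}{60}\).

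I expect this combination step to be the main obstacle: producing a single inequality that interpolates between \(M\) and \(W\), rather than bounding the two terms independently.

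For the equality cases, I would track strictness. Equality in the convex-order step forces either \(A\) to be comonotone, i.e.\ \(X=Y\) a.s., which is \(C=M\), or \(B\) to be concentrated in a way that forces \(X+Y=1\) a.s., which is \(C=W\). The displayed supremum statement then follows immediately.
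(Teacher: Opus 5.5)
Your computation at \(M\) in the rotated coordinates is fine. For general \(C\), however, the decisive inequality is missing, as you say yourself. More seriously, the coupling you propose cannot satisfy it.

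You match \(\Delta\) with \(B=X-Y\) only inside the slices \(\{S=s\}\), after \(S\) has been tied to \(A=X+Y\). Take \(C=W\), with the natural reading that \((X,Y)\) is conditionally independent of \(S\) given \(A\). Then \(A\equiv1\), \(B\sim\cU(-1,1)\) is independent of \(S\), and given \(S\) the variable \(\Delta\) is uniform on \([-m(S),m(S)]\) with \(m(s)=\min\{s,1-s\}\). The slice-wise comonotone coupling is \(B=\Delta/m(S)\). Hence \(\E[B\Delta]=\E[m(S)]\,\E[B^2]=\frac13\cdot\frac13=\frac19\), and \(\E[XU+YV]=\frac12+\frac19=\frac{11}{18}<\frac{37}{60}\). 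The transport cost is therefore \(\frac19>\frac1{10}\).

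Reaching \(\frac{37}{60}\) at \(W\) requires \(B\) to be comonotone with \(\Delta\) globally, in which case \(\E[B\Delta]=\frac{7}{60}\). Your rule permits this only because ``comonotone with a constant'' imposes no constraint. That loophole closes for nearby copulas. Take \(Y=(1-X+\varepsilon Z)\bmod 1\) with \(Z\sim\cU(0,1)\) independent of \(X\).
\begin{itemize}
\item The law of \(X+Y\) is continuous, so \(S\) is a.s.\ a nondecreasing function of \(A\), and your coupling is uniquely determined.
\item Off an event of probability \(\varepsilon/2\), the conditional law of \(B\) given \(A\) is uniform on a symmetric interval of half-width at least \(1-\varepsilon\).
\item The resulting cost is \(\frac19+O(\varepsilon)>\frac1{10}\).
\end{itemize}
So no interpolating inequality can close the gap. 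Conditioning the second matching on \(S\) excludes the anti-diagonal transport that \(W\)-like copulas need. In particular, your claim that the configuration \(A\equiv1\) yields exactly \(\frac{37}{60}\) is false for your construction.

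The paper avoids this with an asymmetric construction.
\begin{itemize}
\item It keeps the first coordinate of the optimal diagonal map, \(X=F_S(U+R)\), with the paper's \(S=U+R\) and an auxiliary uniform \(R\) independent of \(U\). Thus \(\E[(U-X)^2]=\frac1{20}\) for every \(C\).
\item It generates \(Y=F_{C,X}^{-1}(\Lambda)\) from \(X\) with independent noise.
\item It chooses \(V\) conditionally comonotone with \(Y\) given \(U\), not given a sum variable.
\end{itemize}
Since \(V\) is conditionally uniform given \(U\) in any case, this matching adapts automatically to whether \(Y\) increases or decreases in \(R\). For \(W\) it returns \(V=1-R\), which is the reflected optimal map.

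The \(1\)-Lipschitz convex-order lemma you had in mind is used there too, but not on the half-width \(m\). It is applied to the conditional distribution functions \(h_v=Kf_v\) of \(Y\) given \(U\), which are \(1\)-Lipschitz with mean \(v\), giving \(h_v(U)\le_{\mathrm{cx}}g_v(U)\). The conditional convex order turns this into \((Y,V)\ge_{\mathrm{sm}}(X,R)\), and hence \(\E[(V-Y)^2]\le\E[(R-X)^2]=\frac1{20}\).

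For the equality case, the supermodular quadratic-cost lemma forces \((Y,V)\eqd(X,R)\). This gives \(h_v(U)\eqd g_v(U)\) for all \(v\), and the rigidity argument then yields \(Y=X\) or \(Y=1-X\) almost surely. Your equality-case sketch has no counterpart of this, and it would need to be rebuilt together with the coupling.
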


A natural way to quantify
dependence is to measure the Wasserstein distance between the joint
distribution and the product of its marginals; see \citep{Mori-Szekely-2020,Mordant-Segers-2022,Nies-Staudt-Munk-2025,
DeKeyser-Gijbels-2025,Catalano-Lavenant-2025}.
For bivariate copulas this leads to the functional \(C\mapsto\mathcal W_2(C,\Pi)\),
which vanishes if and only if \(C=\Pi\). In order to obtain a
dependence coefficient with range \([0,1]\), the distance has to be
normalized by its maximal possible value over all copulas. 
Theorem~\ref{thm:main} determines this normalization explicitly, which yields the following result.

\begin{corollary}\label{cor:main1}
Let \((X,Y)\) be a bivariate random vector with a continuous distribution function and copula \(C\). Then the functional \(\mathfrak D_{\mathcal W}(X,Y):=\mathfrak D_{\mathcal W}(C)
    :=
    \sqrt{10}\,\mathcal W_2(C,\Pi)\), \(C\in \cC\), 
    satisfies
    \begin{enumerate}[label = (\roman*)]
        \item \(\mathfrak D_{\mathcal W}(X,Y)\in [0,1]\),
        \item \(\mathfrak D_{\mathcal W}(X,Y) = 0\) \(\Longleftrightarrow\) \(X\) and \(Y\) are independent \(\Longleftrightarrow\) \(C = \Pi\),
        \item \(\mathfrak D_{\mathcal W}(X,Y) = 1\) \(\Longleftrightarrow\) \(X\) and \(Y\) are comonotone or countermonotone \(\Longleftrightarrow\) \(C\in \{M,W\}\).
    \end{enumerate}
\end{corollary}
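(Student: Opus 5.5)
This is Corollary~\ref{cor:main1}, which follows directly from Theorem~\ref{thm:main} together with basic facts about copulas and Wasserstein distances. The plan is to verify (i)--(iii) in order, using throughout that \(\mathfrak D_{\mathcal W}(X,Y)\) depends only on the copula \(C\) of \((X,Y)\). Since the distribution function of \((X,Y)\) is continuous, its marginals are continuous, so by Sklar's theorem \(C\) is unique and the functional is well defined.

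For (i), note that \(\mathcal W_2(C,\Pi)\ge 0\) by definition. Theorem~\ref{thm:main} gives \(\mathcal W_2^2(C,\Pi)\le 1/10\), hence \(\sqrt{10}\,\mathcal W_2(C,\Pi)\le 1\).

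For (ii), I would use that \(\mathcal W_2\) is a metric on probability measures on the compact set \([0,1]^2\). The infimum in \eqref{def_W1} is attained by compactness of \(\Gamma(\pi_C,\pi_\Pi)\) and lower semicontinuity of the cost. So \(\mathcal W_2(C,\Pi)=0\) forces an optimal coupling supported on the diagonal, which gives \(\pi_C=\pi_\Pi\), i.e.\ \(C=\Pi\). Conversely, \(C=\Pi\) gives distance \(0\). Finally, by Sklar's theorem with continuous marginals, \(X\) and \(Y\) are independent if and only if their copula is \(\Pi\).

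For (iii), the equality case of Theorem~\ref{thm:main} gives \(\mathfrak D_{\mathcal W}(C)=1\) if and only if \(C\in\{M,W\}\). It remains to recall the classical fact that, for continuous marginals \(F,G\), the copula is \(M\) if and only if \((X,Y)\) is comonotone, and \(W\) if and only if it is countermonotone. In one direction, \((F(X),G(Y))\) is supported on the diagonal (resp.\ anti-diagonal), so \(Y=G^{-1}(F(X))\) (resp.\ \(G^{-1}(1-F(X))\)) almost surely. In the other direction, a monotone functional relation pushes \(U=F(X)\) to \(G(Y)=U\) (resp.\ \(1-U\)) almost surely.

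There is no real obstacle here: all of the substance lies in Theorem~\ref{thm:main}. The only care needed is the attainment argument in (ii), which is standard.
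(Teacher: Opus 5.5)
Your proposal is correct and follows the paper's proof: (i) comes from the bound in Theorem~\ref{thm:main}, (ii) from the metric property of \(\mathcal W_2\) together with Sklar's theorem, and (iii) from the equality characterization in Theorem~\ref{thm:main} together with the classical correspondence between \(M\), \(W\) and co-/countermonotonicity for continuous marginals. Beyond the paper's argument, you only supply details it leaves implicit: attainment of the infimum in \eqref{def_W1}, continuity of the marginals, and the explicit monotone representations \(Y=G^{-1}(F(X))\) and \(Y=G^{-1}(1-F(X))\), respectively.
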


\begin{remark}
\begin{enumerate}[label=\textnormal{(\alph*)}]
\item
Theorem \ref{thm:main} complements the recent Gaussian result of
\citet{Schrott-2026}. For \(n\ge2\) standard Gaussian marginals,
Schrott shows that the couplings farthest in quadratic Wasserstein
distance from the independent Gaussian distribution are precisely the
signed diagonal couplings; in dimension two these reduce to the
monotone and antimonotone Gaussian couplings. His proof relies on
Brenier maps, the Gaussian coarea formula, and Gaussian isoperimetry.
In contrast, the uniform setting considered here has no corresponding
Gaussian isoperimetric structure. Our argument is instead based on the
explicit optimal transport from independence to the diagonal, a sharp
convex-order inequality for one-Lipschitz functions, and a conditional
convex-order comparison. In particular, the two results identify the
same qualitative extremal dependence structures in the Gaussian and
bivariate uniform settings, respectively, but by substantially
different mechanisms.
\item
Corollary~\ref{cor:main1} places the Wasserstein coefficient
\(\mathfrak D_{\mathcal W}\) in the context of classical dependence
measures. For continuous random variables, passing to the rank
transforms yields uniform marginals and hence margin-free dependence
measures; see, e.g., \citet{Nelsen-2006}. Classical concordance
measures such as Kendall's tau and Spearman's rho attain their extrema
at \(M\) and \(W\), but do not characterize independence
\citep{Nelsen-2006}. In contrast, the Schweizer--Wolff measure
vanishes exactly at \(\Pi\) and attains its maximum exactly at \(M\)
and \(W\) \citep{Schweizer-Wolff-1981}, thus sharing the extremal
characterizations in Corollary~\ref{cor:main1}. This differs from
measures of functional dependence such as Chatterjee's rank
correlation \citep{Chatterjee-2021} and the Wasserstein correlation
coefficients of \citet{Wiesel-2022}, which compare conditional distributions with unconditional ones and attain their maximal value more generally under
functional dependence \(Y=f(X)\), without requiring monotonicity.
\end{enumerate}
\end{remark}

\begin{figure}[t]
    \centering
    \includegraphics[width=0.65\textwidth]{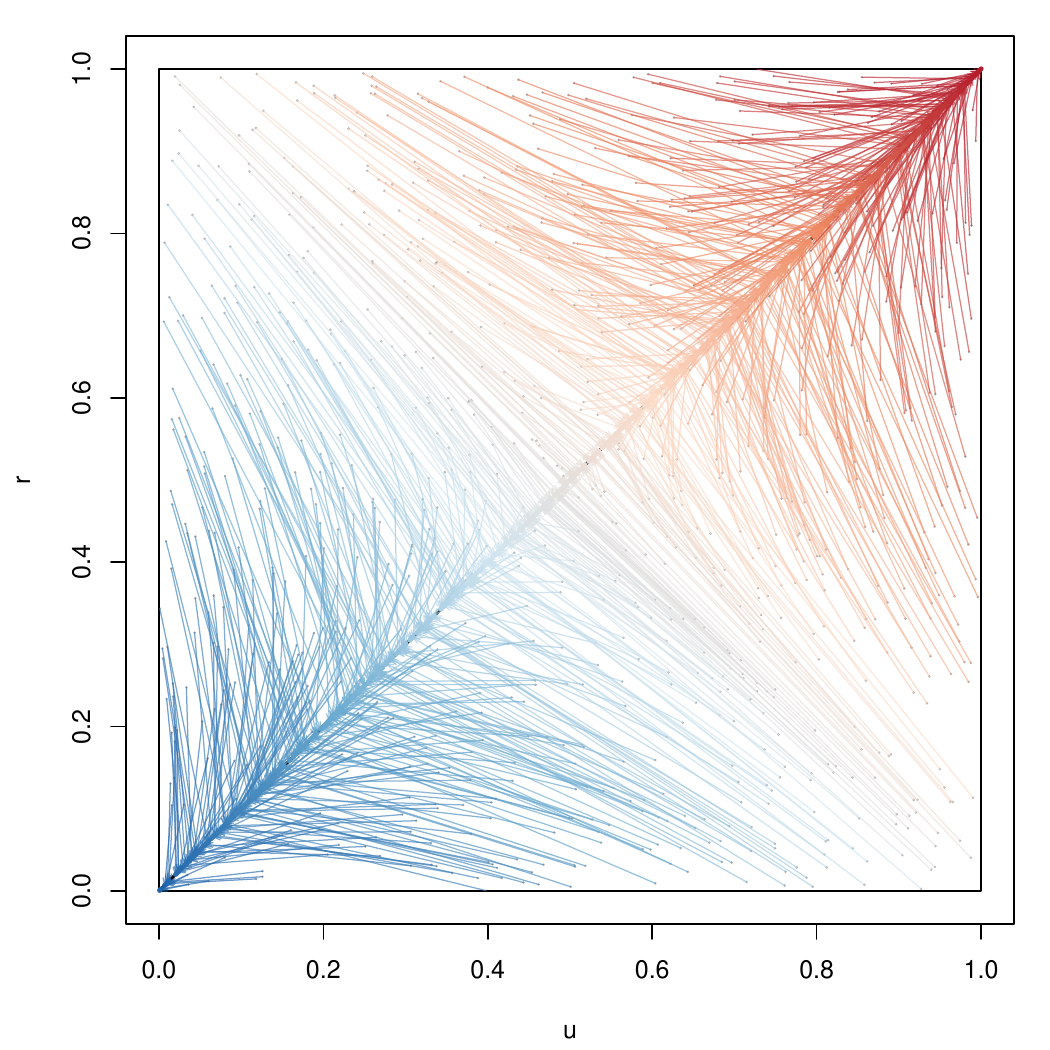}
    \caption{Optimal Monge transport from the independence copula \(\Pi\) to the upper Fr\'echet copula \(M\), induced by the map \(T(u,r)\) in \eqref{eq_Brenier}. The lines map \(1000\) points drawn uniformly from \([0,1]^2\) via \(T\) onto the diagonal.}
    \label{fig:Monge_Pi_M}
\end{figure}

\subsection{Sketch of the proof}
The proof of Theorem \ref{thm:main} is based on a coupling argument that we briefly outline as follows.
The idea is to compare a \emph{suitable} transport from \(\Pi\) to an arbitrary copula \(C\) with the \emph{optimal} transport from \(\Pi\) to the diagonal described by \(M\). Therefore, let \(U,R\sim \cU(0,1)\) be independent and set
\begin{align}\label{def_Xs}
    X:=F_S(U+R),
    \qquad  S:=U+R.
\end{align}
Then \(X\sim \cU(0,1)\), and it is not difficult to see that the diagonal transport
\[
    (U,R)\longmapsto (X,X)
\]
is the optimal transport from \(\Pi\) to \(M\). Indeed, any coupling
between \(\Pi\) and \(M\) can be represented by
\((U,R,Z,Z)\) with \(Z\sim\cU(0,1)\), and its quadratic cost is
\[
    \E[(U-Z)^2+(R-Z)^2]
    =
    \frac43-2\E[(U+R)Z].
\]
Thus minimizing the cost is equivalent to maximizing
\(\E[SZ] = \E[(U+R)Z]\). By the classical rearrangement
inequality, the maximum is attained by the comonotone coupling
\(Z=F_S(S)=X\). Its quadratic cost is
\[
    \E[(U-X)^2+(R-X)^2]
    =
    \frac{1}{10}.
\]
By symmetry, \(\E[(U-X)^2]
    =
    \E[(R-X)^2]
    =
    \frac{1}{20}.
\)

Now let \(C\) be an arbitrary bivariate copula and \(X\) be given by \eqref{def_Xs}. The idea is to construct \(Y\) such that
\[
    (X,Y)\sim C
\]
and \(Y\) is conditionally independent of \((U,R)\) given \(X\). We then construct another uniform random variable \(V\) such that
\[
    (U,V)\sim\Pi
\]
and \(V\) and \(Y\) are comonotone conditionally on \(U\). Hence,
\[
    (U,V)\longmapsto(X,Y)
\]
defines an admissible transport coupling from \(\Pi\) to \(C\). The first coordinate cost is unchanged, i.e.,
\[
    \E[(U-X)^2]=\frac{1}{20},
\]
whereas the main part of the proof shows that the second coordinate cost satisfies
\begin{align}\label{def_inequVY}
    \E[(V-Y)^2]
    \le
    \E[(R-X)^2]
    =
    \frac{1}{20}.
\end{align}
Consequently,
\begin{align}\label{eq_wassbound}
    \mathcal W_2^2(\Pi,C)
    &\le
    \E[(U-X)^2+(V-Y)^2]
    \le
    \E[(U-X)^2+(R-X)^2]
    =
    \frac{1}{10}.
\end{align}

The inequality in \eqref{def_inequVY} is obtained by comparing the conditional distributions of \(Y\) and \(X\) given \(U\). 
The diagonal transport induces the functions
\[
    g_v(t)
    =
    F_{X|U=t}(v)
    =
    [(F_S^{-1}(v)-t)\vee 0 ]\wedge 1;
\]
see Lemma \ref{lem:g_v}.
In contrast, for the target copula \(C\), the above construction yields
conditional distribution functions
\[
    h_v(t):=F_{Y|U=t}(v)
           =(Kf_v)(t),
    \qquad
    f_v(x):=\mathbb P(Y\le v\mid X=x),
\]
where \(K\) is a smoothing kernel defined via \(F_S\) in \eqref{eq:K_def}.
We show that every such \(h_v\) is \(1\)-Lipschitz and satisfies the convex order
\begin{align}\label{eq_conv_ord_hv_gv}
    h_v(U)\le_{\mathrm{cx}}g_v(U) \qquad \text{for all } v\in [0,1].
\end{align}
In fact, \(g_v(U)\) is maximal in convex order among all \(h(U)\) 
with \(h:[0,1]\to[0,1]\) \(1\)-Lipschitz and
\(\int_0^1h(t)\,\de t = v\); see Lemma \ref{lem:Lip_cx}.
The comparison in \eqref{eq_conv_ord_hv_gv} defines the conditional convex order
\[
    (Y,U)\preccurlyeq_{\mathrm{ccx}}(X,U),
\]
a dependence order recently introduced in \cite{Ansari-Fuchs-2025}.
The conditional convex order yields a comparison of the conditionally comonotone random vectors considered above. More precisely, we obtain from its characterization via the supermodular order in Proposition \ref{lem_2_8} the comparison
\begin{align}\label{eq_sm_vyrx}
    (Y,V) \geq_{sm} (X,R);
\end{align}
see Corollary \ref{cor_transcost}.
Here we use that \(Y\) and \(V\), as well as \(X\) and \(R\), are comonotone conditionally on \(U\), while both \(V\) and \(R\) are independent of \(U\); see Lemma \ref{lem_coupling}.
The supermodular comparison in \eqref{eq_sm_vyrx}
yields the desired quadratic-cost inequality
\[
    \E[(V-Y)^2]
    \le
    \E[(R-X)^2];
\]
see again Corollary \ref{cor_transcost}.
Equality in the Wasserstein bound \eqref{eq_wassbound} forces equality in the corresponding comparison of the conditionally comonotone couplings. As shown in Lemma~\ref{lem:smoothing_rigidity} and in the proof of Theorem~\ref{thm:main}, this equality case yields \(Y=X\) or \(Y=1-X\) almost surely, giving precisely the two extremal copulas \(M\) and \(W\).

\subsection{The optimal Monge transport from
\texorpdfstring{\(\Pi\)}{Pi} to \texorpdfstring{\(M\)}{M}}

For quadratic transport costs, it is well known that, if the source measure is absolutely continuous with respect to the Lebesgue measure, the optimal transport is induced by a Monge map given by the gradient of a convex function \(\Phi\); this map is commonly referred to as the Brenier map; see \citet{Rueschendorf-Rachev-1990} and \citet{Brenier-1991}. In the present setting, the Brenier map from \(\Pi\) to \(M\) can be determined explicitly; see Figure~\ref{fig:Monge_Pi_M} for an illustration.

\begin{corollary}[Optimal Monge transport]\label{cor_main}
    The Brenier map from \(\Pi\) to \(M\) is the function \(T\colon [0,1]^2 \to [0,1]^2\) given by
    \begin{align}\label{eq_Brenier}
        T(u,r) = \nabla \Phi(u,r) = \bigl(F_S(u+r),F_S(u+r)\bigr), 
    \end{align}
    where the potential \(\Phi\colon [0,1]^2\to \R\) and the distribution function \(F_S\colon [0,2]\to [0,1]\) are given by
\begin{align}
\Phi(u,r)
&=
\begin{cases}
\dfrac{(u+r)^3}{6},
& u+r\le 1,\\[2mm]
-\dfrac{(u+r)^3}{6}
+(u+r)^2-(u+r)+\dfrac13,
& u+r>1,
\end{cases}
\label{eq:Brenier_potential}
\\[3mm]
F_S(s)
&=
\begin{cases}
\dfrac{s^2}{2},
& 0\le s\le 1,\\[2mm]
1-\dfrac{(2-s)^2}{2},
& 1<s\le 2.
\end{cases}
\label{eq:FS}
\end{align}
\end{corollary}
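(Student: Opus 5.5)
The claim has three parts: \(T\) is the gradient of \(\Phi\); \(\Phi\) is convex; and \(T\) pushes \(\lambda\otimes\lambda\) forward to \(\pi_M\). Brenier's theorem (uniqueness of the optimal map as the gradient of a convex potential when the source is absolutely continuous) then identifies \(T\) as the Brenier map. A consistency check is that the induced cost equals \(\frac1{10}\), the optimal value computed in the sketch.

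\emph{Step 1 (distribution of \(S\)).} For \(U,R\) independent uniform, \(S=U+R\) has the triangular density \(f_S(s)=s\) on \([0,1]\) and \(f_S(s)=2-s\) on \([1,2]\). Integrating gives \eqref{eq:FS}. Since \(F_S\) is continuous and strictly increasing on \([0,2]\), \(X=F_S(S)\sim\cU(0,1)\). Hence \(T_\#(\lambda\otimes\lambda)\) is the law of \((X,X)\), which is \(\pi_M\).

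\emph{Step 2 (gradient).} Write \(\Phi(u,r)=\phi(u+r)\) with \(\phi(s)=s^3/6\) for \(s\le1\) and \(\phi(s)=-s^3/6+s^2-s+\frac13\) for \(s>1\). At \(s=1\) both branches give \(\frac16\), and both derivatives equal \(\frac12\): the first is \(s^2/2\), the second is \(-s^2/2+2s-1=1-(2-s)^2/2\). So \(\phi\in C^1\) and \(\phi'=F_S\). Therefore \(\nabla\Phi(u,r)=(\phi'(u+r),\phi'(u+r))=(F_S(u+r),F_S(u+r))=T(u,r)\).

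\emph{Step 3 (convexity).} Since \(\phi'=F_S\) is nondecreasing, \(\phi\) is convex on \([0,2]\). Composition with the linear map \((u,r)\mapsto u+r\) preserves convexity, so \(\Phi\) is convex. Brenier's theorem now shows that \(T\) is the optimal map and the unique one.

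This agrees with the rearrangement argument in the sketch, where \((U,R)\mapsto(X,X)\) was shown optimal directly. The cost of \(T\) is \(\frac1{10}\), as already computed there, or equivalently \(\frac43-2\E[SF_S(S)]\). No step is a real obstacle. The only care needed is the \(C^1\) gluing at \(s=1\), which guarantees that \(\nabla\Phi\) is defined everywhere and not only almost everywhere; even without it, \(T=\nabla\Phi\) Lebesgue-a.e. would suffice.
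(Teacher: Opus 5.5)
Your proof is correct and essentially the paper's: both write \(\Phi(u,r)=\psi(u+r)\), check \(\psi'=F_S\), and get convexity from monotonicity of \(F_S\). Your explicit \(C^1\) check at \(s=1\) fills in the paper's ``direct differentiation.'' The only difference is where optimality comes from. The paper takes it from Proposition~\ref{lem:diag_transport} (the rearrangement argument), whereas you derive it from \(T_\#(\lambda\otimes\lambda)=\pi_M\) plus the sufficiency and uniqueness parts of Brenier's theorem, which makes the corollary self-contained.
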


\subsection{Organization of the paper}

The rest of the paper is organized as follows.
Section~\ref{sec:prelim} collects the required preliminaries on copulas and conditional distributions, the supermodular order, and the conditional convex order.
Section~\ref{sec:diagonal} determines the optimal transport from independence to the diagonal and analyzes the corresponding conditional distributions.
Section~\ref{sec:convex} establishes the sharp convex-order inequality for \(1\)-Lipschitz functions and introduces the associated Markov smoothing operator.
Section~\ref{sec:couplings} constructs, for an arbitrary target copula \(C\), a suitable transport coupling from \(\Pi\) to \(C\) and derives the key transport-cost comparison.
Finally, Section~\ref{sec:main} proves Theorem~\ref{thm:main}, including the characterization of the equality cases, as well as Corollary~\ref{cor_main} on the optimal Monge transport from \(\Pi\) to \(M\).

\section{Preliminaries}\label{sec:prelim}

Throughout, \(\lambda\) denotes the Lebesgue probability measure on
\([0,1]\), and \(\cU(0,1)\) the corresponding uniform distribution.
All random variables are defined on a common probability space
\((\Omega,\cA,\mathbb P)\), which we assume to be sufficiently rich.
For random variables \(X\) and \(Y\), we write \(X\eqd Y\) for equality
in distribution.
For probability measures \(\mu\) and \(\nu\), we denote by
\(\Gamma(\mu,\nu)\) the set of all couplings of \(\mu\) and \(\nu\),
that is, the set of distributions with first and second marginal
\(\mu\) and \(\nu\), respectively.
For \(\pi\in\Gamma(\lambda,\lambda)\), its distribution function
\[
    C_\pi(u,v):=\pi([0,u]\times[0,v]),
    \qquad (u,v)\in[0,1]^2,
\]
is a \emph{copula}, i.e., a bivariate distribution function with
uniform marginals. In particular,
\[
    C_\pi(u,1)=u=C_\pi(1,u),
    \qquad u\in[0,1].
\]
Conversely, every bivariate copula \(C\) uniquely determines a
probability measure \(\pi_C\in\Gamma(\lambda,\lambda)\). We denote by
\(\cC\) the set of all bivariate copulas and write \((X,Y)\sim C\)
whenever the distribution of \((X,Y)\) is \(\pi_C\). Recall the definition of the Wasserstein distance \(\mathcal{W}_2(C,D)\) in \eqref{def_W1}.
Further recall the copulas
\begin{align}\label{eq:M_Pi_W}
    \Pi(u,v):=uv,\qquad
    M(u,v):=\min\{u,v\},\qquad
    W(u,v):=\max\{u+v-1,0\}.
\end{align}
Their associated probability measures are the laws of
\((U,V)\), \((U,U)\), and \((U,1-U)\), respectively, where
\(U,V\sim\cU(0,1)\) are independent. 
Hence, \(\Pi\) models independence, whereas \(M\) and \(W\) correspond to
comonotonicity and countermonotonicity, respectively. Recall that random
variables \(X\) and \(Y\) are called \emph{comonotone} if there exist a
random variable \(Z\) and nondecreasing functions \(f,g\) such that
\((X,Y)\eqd(f(Z),g(Z))\),
and \emph{countermonotone} if there exist a random variable \(Z\), a
nondecreasing function \(f\), and a nonincreasing function \(g\) such that
\((X,Y)\eqd(f(Z),g(Z)).\)

\subsection{Copula derivatives}

Let \(C\in \cC\) be a copula and let \((V_1,V_2)\sim C\) be a bivariate random vector with distribution \(\pi_C\). Then, the conditional distribution function of \(V_2 \mid V_1=v_1\) can be represented by the first partial derivative of \(C\). More precisely, for fixed \(v_2\), we have
\begin{align*}
    F_{V_2|V_1=v_1}(v_2) = \partial_1 C(v_1,v_2) \qquad \text{for } \lambda\text{-almost all } v_1\in [0,1],
\end{align*}
where the exceptional null set may depend on \(v_2\). Note that \(x\mapsto \partial_1C(v_1,x)\) is increasing but not necessarily right-continuous. However, by the existence of regular conditional distributions, one can construct a Markov kernel \(K_C\) associated with \(C\) such that, for any \(v_2\),
\begin{align*}
    F_{C,v_1}(v_2):= K_C(v_1,[0,v_2]) = \partial_1 C(v_1,v_2) \qquad \text{for } \lambda\text{-almost all } v_1\in [0,1];
\end{align*}
see e.g. \cite{Durante-Sempi-2016} for details.
Then, \(x\mapsto F_{C,v_1}(x)\) is a distribution function for all \(v_1\). We denote by
\begin{align}
    F_{C,v_1}^{-1}(t) := \inf\{x\in [0,1]\mid F_{C,v_1}(x)\geq t\}, \qquad t\in (0,1),
\end{align}
the (left-continuous) generalized inverse of \(F_{C,v_1}\). 

\begin{lemma}[Conditional quantile construction]
\label{lem:conditional_quantile_construction}
For \(C\in\cC\), let \(X,\Lambda\sim\cU(0,1)\) be independent, and define
\[
    Y:=F_{C,X}^{-1}(\Lambda).
\]
Then we have \((X,Y)\sim C\).
\end{lemma}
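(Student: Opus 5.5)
The plan is to show that the joint distribution function of \((X,Y)\) coincides with \(C\) on \([0,1]^2\); since a bivariate distribution is determined by its distribution function, this gives \((X,Y)\sim C\). Fix \((u,v)\in[0,1]^2\). The first step is the standard quantile equivalence for the generalized inverse: for every distribution function \(F\) on \([0,1]\) and \(t\in(0,1)\),
\[
    F^{-1}(t)\le v \iff t\le F(v).
\]
This uses right-continuity of \(F\), which is exactly why the regular version \(F_{C,x}\) is used rather than \(\partial_1 C(x,\cdot)\). Applied pointwise with \(F=F_{C,x}\), it yields
\[
    \{Y\le v\}=\{\Lambda\le F_{C,X}(v)\}
\]
up to the null event \(\{\Lambda\in\{0,1\}\}\).

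The second step is to condition on \(X\). Measurability of \((x,t)\mapsto F_{C,x}^{-1}(t)\) and of \(x\mapsto F_{C,x}(v)\) follows from \(K_C\) being a Markov kernel: \(x\mapsto K_C(x,[0,v])\) is measurable for each \(v\), and \(\{(x,t):F_{C,x}^{-1}(t)\le v\}=\{(x,t):t\le F_{C,x}(v)\}\). Independence of \(X\) and \(\Lambda\) together with Fubini then gives
\[
    \mathbb P(X\le u,\,Y\le v)=\int_0^u \mathbb P\bigl(\Lambda\le F_{C,x}(v)\bigr)\de x=\int_0^u F_{C,x}(v)\de x .
\]

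The third step uses \(F_{C,x}(v)=\partial_1 C(x,v)\) for \(\lambda\)-almost all \(x\), with \(v\) fixed, so the null set depending on \(v\) causes no trouble. Since \(x\mapsto C(x,v)\) is Lipschitz, hence absolutely continuous, with \(C(0,v)=0\), we get \(\int_0^u\partial_1C(x,v)\de x=C(u,v)\). Therefore \(\mathbb P(X\le u,Y\le v)=C(u,v)\) for all \((u,v)\), and so \((X,Y)\sim C\).

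The main point needing care is the measurability and version issue in the second step, namely that the quantile is built from a genuine Markov kernel. Once that is granted, the remaining steps are routine.
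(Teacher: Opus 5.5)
Your proposal is correct and follows the paper's argument: condition on \(X=x\), use that \(Y\) then has distribution function \(F_{C,x}\), and integrate to get \(\int_0^u F_{C,x}(v)\,\de x=C(u,v)\). You add details the paper leaves implicit, namely the quantile equivalence, the joint measurability of \((x,t)\mapsto F_{C,x}^{-1}(t)\), and absolute continuity of \(x\mapsto C(x,v)\) in place of a direct appeal to the disintegration theorem.
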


\begin{proof}
Conditionally on \(X=x\), the variable \(Y=F_{C,x}^{-1}(\Lambda)\) has
distribution function \(F_{C,x}\). Hence, for \(u,v\in[0,1]\),
\begin{align*}
    \mathbb P(X\le u,Y\le v)
    &=\int_0^u
      \mathbb P(Y\le v\mid X=x)\,\de x
    =\int_0^u F_{C,x}(v)\,\de x
     =C(u,v),
\end{align*}
where the last identity follows from disintegration theorem.
\end{proof}

\subsection{Supermodular order}

For bivariate random vectors \((Y,Z)\) and \((Y',Z')\) with \(\cU(0,1)\)-marginals, the \emph{supermodular order} \((Y,Z)\leq_{sm} (Y',Z')\) is defined by 
\begin{align*}
     \E f(Y,Z)\leq \E f(Y',Z') \quad \text{for all bounded supermodular functions } f\colon [0,1]^2\to \R;
\end{align*}
see e.g. \citet{Mueller-Stoyan-2002}. Recall that a function \(f\) is \emph{supermodular} if \(f(y) + f(z)\leq f(y\wedge z) + f(y\vee z)\) for all \(y,z\in [0,1]^2\), where \(\wedge\) and \(\vee\) denote the componentwise minimum and maximum, respectively. 
The supermodular order is well known in optimal transport theory since comonotone couplings are maximal elements and thus solve, for convex cost functions, optimal transport problems \emph{on the real line}; see, e.g., \citet[Theorem~3.1.2]{Rachev-Rueschendorf-1998}.

For optimal transport problems between distributions on \(\R^2\), the situation is much more challenging.
For the identification of the copulas farthest from independence, we will need
the following lemma. It shows that two supermodularly ordered bivariate random
vectors with uniform marginals and identical quadratic costs must have
the same distribution.

\begin{lemma}[Quadratic cost and supermodular order]
\label{lem:J_lower_orthant}
Let \(Y,Z,Y',Z'\sim\cU(0,1)\) and suppose that
\((Y,Z)\geq_{\mathrm{sm}}(Y',Z')\)
Then, we have
\[
    \E[(Y-Z)^2]
    \le
    \E[(Y'-Z')^2].
\]
with equality if and only if \((Y,Z)\eqd(Y',Z')\).
\end{lemma}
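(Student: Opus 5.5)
Since all four variables are \(\cU(0,1)\), the second moments coincide and
\[
\E[(Y-Z)^2]=\tfrac23-2\E[YZ],\qquad \E[(Y'-Z')^2]=\tfrac23-2\E[Y'Z'],
\]
so the whole statement reduces to comparing \(\E[YZ]\) with \(\E[Y'Z']\). The function \(f(y,z)=yz\) is bounded and supermodular on \([0,1]^2\), because it has nonnegative mixed difference: for \(y\le y'\), \(z\le z'\) one has \((y'-y)(z'-z)\ge0\). Hence \((Y,Z)\ge_{\mathrm{sm}}(Y',Z')\) gives \(\E[YZ]\ge\E[Y'Z']\), which is the inequality.

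For the equality case, the ``if'' direction is trivial. For ``only if'', I will use the Hoeffding representation. Write \(C\) and \(C'\) for the copulas (joint distribution functions) of \((Y,Z)\) and \((Y',Z')\). Hoeffding's covariance identity gives
\[
\E[YZ]-\E[Y]\E[Z]=\int_0^1\!\!\int_0^1 \bigl(C(u,v)-uv\bigr)\,\de u\,\de v,
\]
and the same identity holds for \(C'\). Thus equality of the quadratic costs means \(\int\!\!\int (C-C')\,\de u\,\de v=0\).

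On the other hand, for fixed \((u,v)\) the indicator \(\1\{y\le u,\,z\le v\}\) is bounded and supermodular: it is the product of two nonincreasing functions of \(y\) and \(z\) respectively, and the lower orthant indicator is a standard supermodular function. The supermodular order therefore implies \(C(u,v)\ge C'(u,v)\) for all \((u,v)\), i.e.\ the lower orthant order. So \(C-C'\) is a nonnegative function with zero integral, hence it vanishes Lebesgue-almost everywhere. Copulas are continuous (indeed 1-Lipschitz), so \(C=C'\) everywhere, which gives \((Y,Z)\eqd(Y',Z')\).

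The only point needing care is the supermodularity of the orthant indicator, which I will verify directly. For \(a=(y_1,z_1)\) and \(b=(y_2,z_2)\), one has \(\1\{a\le(u,v)\}+\1\{b\le(u,v)\}\le \1\{a\wedge b\le(u,v)\}+\1\{a\vee b\le(u,v)\}\), since \(a\vee b\le(u,v)\) holds iff both \(a\) and \(b\) satisfy it, and \(a\wedge b\le(u,v)\) holds whenever either does. Everything else is routine; I expect no real obstacle.
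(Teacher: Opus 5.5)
Your proof is correct and follows essentially the paper's route: reduce to \(\E[YZ]=\int_0^1\!\int_0^1 C(u,v)\,\de u\,\de v\) (you via Hoeffding's identity, the paper via \(\E[YZ]=\int\!\int\mathbb P(Y>u,Z>v)\,\de u\,\de v\)), use the pointwise ordering \(C\ge C'\), and conclude equality in the tight case from continuity of copulas. The only difference is that you check the needed implication from the supermodular order to the lower orthant order directly, via supermodularity of orthant indicators, whereas the paper cites the equivalence of M\"uller and Scarsini.
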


\begin{proof}
Let \(C\) and \(D\) denote the copulas of \((Y,Z)\) and
\((Y',Z')\), respectively. Then \((Y,Z)\geq_{sm} (Y',Z')\) is equivalent (in the bivariate case) to the pointwise comparison
\begin{align}\label{eq_CD}
    C(u,v)\ge D(u,v)
    \qquad\text{for all }(u,v)\in[0,1]^2;
\end{align}
see \citet[Theorem~2.5]{Mueller-Scarsini-2000}. Using \eqref{eq_CD} and uniform marginals, we obtain
\begin{align*}
    \E[(Y-Z)^2] &= \E[Y^2] + \E[Z^2] - 2 \E[YZ] \\
    &= \frac 1 3 + \frac 1 3 - 2 \int_0^1\int_0^1 \mathbb P(Y>u,Z>v)\de u \de v \\
    &= \frac 2 3 - 2 \int_0^1 \int_0^1 [1 - u - v + C(u,v)] \de u \de v \\
    &= \frac 2 3 - 2 \int_0^1 \int_0^1 C(u,v) \de u \de v \\
    &\leq \frac 2 3 - 2 \int_0^1 \int_0^1 D(u,v) \de u \de v  = \ldots = \E[(Y'-Z')^2]
\end{align*}
with equality if and only if \(C=D\) \(\lambda^2\)-almost surely on \([0,1]^2\). Since copulas are continuous, the statement follows. 
\end{proof}

\subsection{Conditional convex order}
For bounded random variables \(A,B\), the \emph{convex order} \(A\le_{\mathrm{cx}}B\) is defined by \(\E\varphi(A)\le\E\varphi(B)\) for all convex functions \(\varphi\colon \R \to \R\). 
The conditional convex order, recently introduced in \cite{Ansari-Fuchs-2025}, compares conditional probabilities in convex order. Here, we consider the specific case of bivariate \(\cU(0,1)\)-distributed random variables.
\begin{definition}[Conditional convex order]
    For random variables \(Y,Y',Z,Z'\sim \cU(0,1)\), the \emph{conditional convex order} \((Y,Z)\preccurlyeq_{\mathrm{ccx}}(Y',Z')\) is defined by
    \begin{align}\label{eq:ccx_special}
    F_{Y|Z}(v)\le_{\mathrm{cx}}F_{Y'|Z'}(v)
 \quad \text{for all } v\in[0,1].
\end{align}
\end{definition}

\begin{remark}
The conditional convex order differs fundamentally from the
supermodular order in its interpretation of dependence. The
supermodular order compares the degree and direction of concordance
within a fixed Fr\'echet class: in the bivariate case, its extremal
elements are the countermonotone and comonotone couplings \(W\) and
\(M\), respectively. In contrast, the conditional convex order is an
ordering of the strength of functional dependence. Independence forms
its minimal element, whereas perfectly functionally dependent pairs form its
maximal class; see \cite{Ansari-Fuchs-2025} for details. Thus, it does not distinguish positive from negative association, but rather compares how strongly the first component is
determined by the second.
\end{remark}

The following result will be key to the proof of Theorem~\ref{thm:main}.
It provides a characterization of the conditional convex order in terms of
a supermodular comparison of conditionally comonotone random vectors.

\begin{proposition}[Characterization of \(\preccurlyeq_{ccx}\)]\label{lem_2_8}
    For random variables \(Y,Y',Z,Z'\sim \cU(0,1)\), we have
    \begin{align}
        (Y,Z)\preccurlyeq_{ccx} (Y',Z') \qquad \Longleftrightarrow \qquad (F_{Y|Z}^{-1}(\xi),\xi) \geq_{sm} (F_{Y'|Z'}^{-1}(\xi'),\xi')
    \end{align}
    for \(\xi\sim \cU(0,1)\) independent of \(Z\) and for \(\xi'\sim \cU(0,1)\) independent of \(Z'\).
\end{proposition}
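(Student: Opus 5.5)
The plan is to compute the copula of the conditionally comonotone pair explicitly and then use the bivariate characterization of the supermodular order by pointwise comparison of copulas \citep[Theorem~2.5]{Mueller-Scarsini-2000}, already invoked in the proof of Lemma~\ref{lem:J_lower_orthant}. Throughout, \(F_{Y|Z}^{-1}(\xi)\) denotes \(F_{Y|Z=z}^{-1}(\xi)\) evaluated at \(z=Z\). We work with a regular version of the conditional distribution, for instance the Markov kernel of the copula of \((Z,Y)\) as in the subsection on copula derivatives. Set \(\tilde Y:=F_{Y|Z}^{-1}(\xi)\) and \(A_v:=F_{Y|Z}(v)\); define \(\tilde Y'\) and \(B_v:=F_{Y'|Z'}(v)\) analogously.

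\emph{Step 1: the marginals are uniform.} Conditionally on \(Z=z\), \(\tilde Y\) has distribution function \(F_{Y|Z=z}\), because \(\xi\) is independent of \(Z\). This is the same argument as in Lemma~\ref{lem:conditional_quantile_construction}. Hence \((Z,\tilde Y)\eqd(Z,Y)\), so in particular \(\tilde Y\sim\cU(0,1)\). Also \(\xi\sim\cU(0,1)\), so both pairs lie in the Fr\'echet class of \(\cU(0,1)\)-marginals and the Müller--Scarsini criterion applies.

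\emph{Step 2: the copula of \((\tilde Y,\xi)\).} For \(v,t\in[0,1]\) we use the Galois inequality \(F^{-1}(s)\le v \Leftrightarrow s\le F(v)\) for the left-continuous generalized inverse. Conditioning on \(Z\) then gives
\begin{align*}
\mathbb P(\tilde Y\le v,\ \xi\le t)
=\E\bigl[\mathbb P(\xi\le A_v,\ \xi\le t\mid Z)\bigr]
=\E\bigl[\min\{A_v,t\}\bigr].
\end{align*}
The same formula holds for the primed pair with \(B_v\) in place of \(A_v\). Thus \((\tilde Y,\xi)\ge_{sm}(\tilde Y',\xi')\) is equivalent to
\[
\E\min\{A_v,t\}\ge\E\min\{B_v,t\}\qquad\text{for all } v,t\in[0,1].
\]

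\emph{Step 3: reduction to convex order.} Since \(\min\{a,t\}=a-(a-t)^+\), and \(\E A_v=\mathbb P(Y\le v)=v=\E B_v\), the condition of Step 2 becomes
\[
\E(A_v-t)^+\le\E(B_v-t)^+\qquad\text{for all } t\in[0,1],\ v\in[0,1].
\]
Because \(A_v,B_v\) take values in \([0,1]\), this inequality extends trivially to all \(t\in\R\). For \(t<0\) both sides equal \(v-t\), and for \(t>1\) both sides vanish. The classical characterization of the convex order then applies: equal means together with ordered stop-loss transforms for all \(t\in\R\) is equivalent to \(A_v\le_{\mathrm{cx}}B_v\). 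This holds for every \(v\), which is exactly \((Y,Z)\preccurlyeq_{ccx}(Y',Z')\) by \eqref{eq:ccx_special}. Every step is an equivalence, so both directions follow at once.

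The only delicate point is Step 2. It requires checking that the quantile inversion and the conditioning are measurable and valid for all \(v\), not merely almost every \(v\). Using the regular kernel version \(F_{C,z}\), which is a genuine distribution function for every \(z\), settles this. Right-continuity in \(v\) then makes the pointwise copula comparison hold everywhere.
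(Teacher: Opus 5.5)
Your proof is correct, and it takes a genuinely different route from the paper. The paper proves the proposition by two citations. It uses \citet{Ansari-Fuchs-2025} for the equivalence of \((Y,Z)\preccurlyeq_{\mathrm{ccx}}(Y',Z')\) with the concordance order between the two conditionally comonotone pairs. It then uses \citet[Theorem~2.5]{Mueller-Scarsini-2000} to pass from the concordance order to the supermodular order in dimension two.

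You keep only the second ingredient, in the form ``\(\geq_{sm}\) iff pointwise ordering of copulas''. You replace the first by a direct computation: the copula of \((F_{Y|Z}^{-1}(\xi),\xi)\) is \((v,t)\mapsto\E[\min\{F_{Y|Z}(v),t\}]\). Because \(\min\{a,t\}=a-(a-t)^+\) and \(\E F_{Y|Z}(v)=v=\E F_{Y'|Z'}(v)\), the pointwise copula comparison is exactly the stop-loss characterization of \(F_{Y|Z}(v)\le_{\mathrm{cx}}F_{Y'|Z'}(v)\).

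The paper's version is shorter and places the statement inside the general theory of the conditional convex order. However, it is a black box resting on a recent external reference. Yours is self-contained and elementary, and it shows that the uniformity of \(Z,Z'\) plays no role. It also isolates the identity that the paper re-derives by hand in the equality part of the proof of Theorem~\ref{thm:main}. There, \(\mathbb P(Y\le v,V\le w)=\E[\min\{h_v(U),w\}]\) is computed and the stop-loss characterization is invoked again. With your formulation that step is immediate: the two conditionally comonotone pairs are equal in law if and only if \(F_{Y|Z}(v)\eqd F_{Y'|Z'}(v)\) for all \(v\), since equal means and equal stop-loss transforms determine the law.
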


\begin{proof}
\((Y,Z)\preccurlyeq_{ccx} (Y',Z')\) is equivalent to \((F_{Y|Z}^{-1}(\xi),\xi) \geq_{c} (F_{Y'|Z'}^{-1}(\xi),\xi)\), where \(\geq_c\) denote the concordance order; see \citet{Ansari-Fuchs-2025}. However, for bivariate random vectors, the concordance order is equivalent to the supermodular order; see \cite[Theorem 2.5]{Mueller-Scarsini-2000}.
\end{proof}

\section{The optimal transport from independence to the diagonal}\label{sec:diagonal}

For independent random variables \(U,R\sim \cU(0,1)\), define their sum \(S:=U+R\).
The distribution function of \(S\) is
\begin{align}\label{eq:triangular_cdf}
    F_S(s)=
    \begin{cases}
        \dfrac{s^2}{2}, & 0\le s\le1,\\[1mm]
        1-\dfrac{(2-s)^2}{2}, & 1\le s\le2.
    \end{cases}
\end{align}
It is continuous and strictly increasing on \([0,2]\). Hence, the probability integral transform of \(S\) is uniform on \([0,1]\), i.e.,
\begin{align*}
    F_S(U+R)\sim \cU(0,1).
\end{align*}
In the following proposition, we establish an optimal transport map from \(\Pi\) to \(M\) and determine its cost.

\begin{proposition}[Optimal transport to the diagonal]\label{lem:diag_transport}
The coupling
\[
    (U,R)\longmapsto (X,X),
    \qquad X=F_S(U+R),
\]
is optimal from \(\Pi\) to \(M\) for the quadratic Euclidean cost. Moreover,
\begin{align}\label{eq:W2_Pi_M}
    \mathcal{W}_2^2(\Pi,M)
    =
    \mathcal{W}_2^2(\Pi,W)
    =
    \frac{1}{10}.
\end{align}
\end{proposition}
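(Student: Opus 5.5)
The plan follows the sketch in the introduction; the only real work is the rearrangement step and one explicit integral.

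\emph{Reduction to a one-dimensional problem.} Any coupling of \(\pi_\Pi\) and \(\pi_M\) is the law of a vector \((U,R,Z,Z)\) with \(U,R\sim\cU(0,1)\) independent and \(Z\sim\cU(0,1)\), because \(\pi_M\) is concentrated on the diagonal. Expanding the cost and using \(\E U^2=\E R^2=\E Z^2=\tfrac13\) gives
\[
\E[(U-Z)^2+(R-Z)^2]=\tfrac43-2\E[SZ],\qquad S:=U+R .
\]
Minimizing the transport cost is therefore the same as maximizing \(\E[SZ]\) over all couplings of the law of \(S\) with \(\cU(0,1)\). This is a one-dimensional problem.

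\emph{Optimality of the diagonal transport.} By the classical rearrangement (Hoeffding--Fr\'echet) inequality, \(\E[SZ]\) is maximized by the comonotone coupling. Since \(F_S\) is continuous and strictly increasing on \([0,2]\), that coupling is \(Z=F_S(S)=X\). Hence \((U,R)\mapsto(X,X)\) is optimal. Equivalently, since \(\E[SZ]=\int\!\!\int \mathbb P(S>s,Z>z)\,\de s\,\de z\), the lower orthant probabilities are maximal at the comonotone coupling. This also follows from \citet[Theorem~3.1.2]{Rachev-Rueschendorf-1998}.

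\emph{Computing the cost.} By symmetry in \(U\) and \(R\), \(\E[UX]=\E[RX]=\tfrac12\E[SX]\). Since \(\E X^2=\tfrac13\), it suffices to compute
\[
\E[SF_S(S)]=\int_0^2 sF_S(s)f_S(s)\,\de s,
\]
where \(f_S(s)=s\) on \([0,1]\) and \(f_S(s)=2-s\) on \([1,2]\). On \([0,1]\) the integrand is \(s^4/2\), which integrates to \(\tfrac1{10}\). On \([1,2]\), substituting \(t=2-s\) gives
\[
\int_0^1 (2-t)\Bigl(1-\tfrac{t^2}{2}\Bigr)t\,\de t=1-\tfrac13-\tfrac14+\tfrac1{10}=\tfrac{31}{60}.
\]
So \(\E[SX]=\tfrac{6}{60}+\tfrac{31}{60}=\tfrac{37}{60}\), and the cost equals \(\tfrac43-\tfrac{37}{30}=\tfrac1{10}\). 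By symmetry each coordinate contributes \(\tfrac1{20}\).

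\emph{The case of \(W\).} The measure-preserving reflection \((u,v)\mapsto(u,1-v)\) is an isometry of \([0,1]^2\). It maps \(\pi_\Pi\) to itself and \(\pi_M\) to \(\pi_W\). Applying it to both marginals of any coupling preserves the cost, so \(\mathcal W_2^2(\Pi,W)=\mathcal W_2^2(\Pi,M)=\tfrac1{10}\).

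The only potential obstacle is justifying the rearrangement step for a non-uniform \(S\). This is standard: comonotone couplings are supermodular-maximal, and \((s,z)\mapsto sz\) is supermodular. The computation of \(\E[SF_S(S)]\) is routine.
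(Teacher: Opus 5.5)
Your proposal is correct and follows essentially the same route as the paper: the reduction of the cost to $\frac43-2\E[SZ]$, the comonotone rearrangement giving $Z=F_S(S)=X$, the computation $\E[SF_S(S)]=\frac{37}{60}$, and a reflection argument for $W$, which you spell out more explicitly than the paper does. One minor wording slip: $\E[SZ]=\int\!\!\int\mathbb P(S>s,Z>z)\,\de s\,\de z$ involves upper rather than lower orthant probabilities, but with fixed marginals maximizing either one is equivalent, so the argument is unaffected.
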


\begin{proof}
Every coupling between \(\Pi\) and \(M\) can be represented by a triple
\((U,R,Z)\) such that \((U,R)\sim\Pi\), \(Z\sim\cU(0,1)\), where the transport is from the initial point \((U,R)\) to the target
point \((Z,Z)\). Then the expected quadratic transport cost is
\begin{align}\label{eq:diag_cost_expand}
\begin{split}
    \E\bigl[(U-Z)^2+(R-Z)^2\bigr]
    &=\E[U^2+R^2+2Z^2]-2\E[(U+R)Z]
    =\frac{4}{3}-2\E[SZ].
\end{split}
\end{align}
Consequently, minimizing the transport cost is equivalent to maximizing
\(\E[SZ] = \E[(U+R)Z]\) over all couplings of \(S\) with a uniform random variable \(Z\).
By the classical maximal-correlation property of the comonotone coupling, this expectation is maximized when \(S\) and \(Z\) are coupled comonotonically; see,
for instance, \citet[Remark 3.25]{Rueschendorf-2013}. Since \(F_S(S)\) is uniform and
\(F_S\) is strictly increasing on \([0,2]\), this coupling is
\begin{align}\label{eq_opt_coupling}
    Z=F_S(S)=X.
\end{align}
Consequently, an optimal Monge transport from \(\Pi\) to \(M\) is given by
\[ T\colon [0,1]^2\to[0,1]^2,
\qquad
T(u,r)
=
\bigl(F_S(u+r),F_S(u+r)\bigr).
\]
To determine the optimal transport cost, we observe that the density of \(S\) equals \(s\) on \([0,1]\) and \(2-s\) on \([1,2]\).
Hence
\begin{align*}
    \E[SF_S(S)]
    &=
    \int_0^1 s\,\frac{s^2}{2}\,s\,\de s
    +
    \int_1^2
    s\left(1-\frac{(2-s)^2}{2}\right)(2-s)\,\de s
    =\frac{37}{60}.
\end{align*}
Substitution into \eqref{eq:diag_cost_expand} gives
\[
    \mathcal{W}_2^2(\Pi,M)
    =
    \frac{4}{3}-2\cdot\frac{37}{60}
    =
    \frac{1}{10}.
\]
Finally, a simple reflection argument shows that the lower Fr\'{e}chet copula, supported on the anti-diagonal, is also optimal. This gives \(\mathcal{W}_2(\Pi,W)=\mathcal{W}_2(\Pi,M)\),
which completes the proof.
\end{proof}

\begin{corollary}\label{cor:half_cost}
For the optimal coupling in \eqref{eq_opt_coupling}, we have
\begin{align}\label{eq:half_cost}
    \E[(U-X)^2]
    =
    \E[(R-X)^2]
    =
    \frac{1}{20}.
\end{align}
\end{corollary}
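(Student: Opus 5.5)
The plan is to use the exchangeability of \((U,R)\) together with the fact that \(X\) is a symmetric function of the pair, and then to read off the common value from the total cost computed in Proposition~\ref{lem:diag_transport}.

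First, since \(U\) and \(R\) are i.i.d.\ \(\cU(0,1)\), the vector \((U,R)\) is exchangeable: \((U,R)\eqd(R,U)\). The map \((u,r)\mapsto\bigl(u,F_S(u+r)\bigr)\) is measurable. Because \(F_S(u+r)=F_S(r+u)\), applying it to both sides of this distributional identity gives
\[
    (U,X)=\bigl(U,F_S(U+R)\bigr)\eqd\bigl(R,F_S(R+U)\bigr)=(R,X).
\]
All variables take values in \([0,1]\), so the relevant moments are finite. It follows that \(\E[(U-X)^2]=\E[(R-X)^2]\).

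Second, Proposition~\ref{lem:diag_transport} shows that the coupling \((U,R)\mapsto(X,X)\) is optimal. It also computes its cost:
\[
    \E[(U-X)^2]+\E[(R-X)^2]=\E\bigl[(U-X)^2+(R-X)^2\bigr]=\mathcal W_2^2(\Pi,M)=\frac1{10}.
\]
Two equal terms summing to \(\tfrac1{10}\) must each equal \(\tfrac1{20}\), which proves \eqref{eq:half_cost}.

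There is no real obstacle here. The only point needing care is that the symmetry argument must use the joint law of \((U,X)\), not merely the marginal laws. This holds because \(X\) is a symmetric measurable function of \((U,R)\). As an optional sanity check, one could compute directly \(\E[UX]=\tfrac12\E[SX]=\tfrac{37}{120}\). Then \(\tfrac13+\tfrac13-2\cdot\tfrac{37}{120}=\tfrac1{20}\), which agrees with the result.
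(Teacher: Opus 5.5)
Your proposal is correct and matches the paper's argument. Because \(X=F_S(U+R)\) is symmetric in \((U,R)\), the two terms are equal, and the total cost \(\tfrac{1}{10}\) from Proposition~\ref{lem:diag_transport} then splits as \(\tfrac{1}{20}+\tfrac{1}{20}\). Your sanity check via \(\E[UX]=\tfrac{37}{120}\) is also arithmetically correct.
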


\begin{proof}
The construction of \(X\) is symmetric in \(U\) and \(R\). Hence, the statement follows from \eqref{eq:W2_Pi_M} and \eqref{eq:diag_cost_expand}.
\end{proof}

We shall also need the quantile function of \(S\), given by
\begin{align}\label{eq:q_def}
    q_S(v):=F_S^{-1}(v)
    =\begin{cases}
       \sqrt{2v}, & 0\le v\le \frac12,\\[1mm]
       2-\sqrt{2(1-v)}, & \frac12 < v\le1.
     \end{cases}
\end{align}
For \(z\in \R\), we write \([z]_0^1:=\min\{1,\max\{0,z\}\}\)

\begin{lemma}[Conditional distributions of the diagonal transport]\label{lem:g_v}
For \(v,u\in[0,1]\), the conditional distribution function of \(X\) given \(U=t\) is given by
\begin{align}\label{eq:g_v}
    g_v(t)
    := F_{X|U=t}(v) 
    =[q_S(v)-t]_0^1.
\end{align}
In particular, \(g_v\colon[0,1]\to[0,1]\) is decreasing, \(1\)-Lipschitz, and satisfies \(\int_0^1 g_v(t)\,\de t=v\).
\end{lemma}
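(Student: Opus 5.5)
Since \(F_S\) is continuous and strictly increasing on \([0,2]\), the event \(\{X\le v\}\) coincides almost surely with \(\{U+R\le q_S(v)\}\). I will compute the conditional distribution of \(X\) given \(U\) through this event, using that \(R\) is independent of \(U\) and uniform on \([0,1]\). Then I will read off the three listed properties directly from the resulting closed form.

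\textbf{Step 1: the formula.} Fix \(v\in[0,1]\) and \(t\in[0,1]\). Because \(R\) is independent of \(U\),
\[
F_{X|U=t}(v)=\mathbb P(t+R\le q_S(v))=\mathbb P(R\le q_S(v)-t)=[q_S(v)-t]_0^1 .
\]
The last equality is just the distribution function of \(\cU(0,1)\), namely \(x\mapsto[x]_0^1\). This gives a version of the conditional distribution function that is defined for every \(t\), and it serves as a regular version.

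\textbf{Step 2: monotonicity and the Lipschitz bound.} The map \(t\mapsto q_S(v)-t\) is decreasing and \(1\)-Lipschitz. The clamp \(z\mapsto[z]_0^1\) is nondecreasing and \(1\)-Lipschitz. Their composition \(g_v\) is therefore decreasing and \(1\)-Lipschitz, and it takes values in \([0,1]\).

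\textbf{Step 3: the integral.} This step should come from the law of total probability rather than a direct computation:
\[
\int_0^1 g_v(t)\,\de t=\mathbb P(X\le v)=v,
\]
since \(X\sim\cU(0,1)\).

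As a sanity check, the integral can also be verified directly. For \(v\le \tfrac12\) we have \(q=\sqrt{2v}\le1\), so
\[
\int_0^q (q-t)\,\de t=\frac{q^2}{2}=v .
\]
For \(v>\tfrac12\) we have \(q\in(1,2]\), and \(g_v(t)=1\) on \([0,q-1]\) while \(g_v(t)=q-t\) on \([q-1,1]\). This gives
\[
(q-1)+\frac{(2-q)^2}{2}=1-\frac{(2-q)^2}{2}\cdot 1=F_S(q)=v .
\]

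No step here is a real obstacle. The only point needing care is the measure-theoretic one in Step 1: the formula holds for every \(t\), not merely for almost every \(t\). This follows from the independence of \(U\) and \(R\), which makes \(t\mapsto \mathbb P(t+R\le q)\) a valid choice of conditional probability.
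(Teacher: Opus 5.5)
Your proof is correct and follows the paper's argument. Strict monotonicity of \(F_S\) turns \(\{X\le v\}\) into \(\{R\le q_S(v)-t\}\), independence of \(R\) and \(U\) gives the clamped affine formula, monotonicity and the \(1\)-Lipschitz property are read off from it, and the integral identity is the law of total probability with \(X\sim\cU(0,1)\).

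The only error is in your optional sanity check for \(v>\tfrac12\). There \(\int_{q-1}^1(q-t)\,\de t=\tfrac{q(2-q)}{2}\), not \(\tfrac{(2-q)^2}{2}\), so your displayed identity \((q-1)+\tfrac{(2-q)^2}{2}=1-\tfrac{(2-q)^2}{2}\) is false for \(q\in(1,2)\). The corrected sum \((q-1)+\tfrac{q(2-q)}{2}\) does equal \(1-\tfrac{(2-q)^2}{2}=F_S(q)=v\).
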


\begin{proof}
Since \(F_S\) is strictly increasing, conditionally on \(U=t\), we have
\[
    \{X\le v\}
    =\{F_S(t+R)\le v\}
    =\{R\le q_S(v)-t\}.
\]
Since \(R\) is uniform on \((0,1)\) and independent of \(U\), this gives \eqref{eq:g_v}. The monotonicity and Lipschitz property in \(t\) are immediate. Finally, the integral identity is a consequence of disintegration and \(X\sim \cU(0,1)\).
\end{proof}

\section{A sharp convex-order inequality and Markov smoothing}\label{sec:convex}

A key ingredient to the proof of Theorem \ref{thm:main} is the following extremal property of the function \(g_v\) in \eqref{eq:g_v}.

\begin{lemma}[Sharp \(1\)-Lipschitz rearrangement inequality]\label{lem:Lip_cx}
For \(v\in[0,1]\), let \(h\colon[0,1]\to[0,1]\) be Lipschitz-continuous with Lipschitz constant \(1\) and \(\int_0^1h(t)\,\de t=v\).
Then, we have 
\begin{align*}
h(U)\leq_{\mathrm{cx}}g_v(U)
\end{align*}
for \(U\sim\cU(0,1)\).
\end{lemma}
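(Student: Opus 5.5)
We reduce the convex-order claim to a comparison of decreasing rearrangements. Both \(h(U)\) and \(g_v(U)\) have mean \(v\), since \(\int_0^1 h = v = \int_0^1 g_v\) by Lemma~\ref{lem:g_v}. For equal means, \(A\le_{\mathrm{cx}}B\) is equivalent to
\[
\int_0^x F_A^{-1}(1-s)\,\de s\le \int_0^x F_B^{-1}(1-s)\,\de s \qquad \text{for all } x\in[0,1],
\]
that is, to majorization of the decreasing rearrangements. Since \(g_v\) is already decreasing on \([0,1]\), it is its own decreasing rearrangement. It therefore suffices to show, with \(h^*\) the decreasing rearrangement of \(h\) on \([0,1]\),
\[
\int_0^x h^*(t)\,\de t\le\int_0^x g_v(t)\,\de t\qquad\text{for all }x\in[0,1].
\]

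The first step is to note that \(h^*\) is again \(1\)-Lipschitz with values in \([0,1]\) and integral \(v\). This is the standard fact that monotone rearrangement does not increase the Lipschitz constant. One way to see it: the level set \(\{h>y\}\) has measure \(m(y)\), and for \(y<y'\) the set \(\{h>y\}\) contains a \((y'-y)\)-neighbourhood of \(\{h>y'\}\) intersected with \([0,1]\). Since the set \(\{h>y'\}\) has a boundary point in \([0,1]\) unless it is empty or everything, this gives \(m(y)-m(y')\ge y'-y\). That inequality is exactly the statement that the inverse of \(m\), namely \(h^*\), has slope at most \(1\) in absolute value. We may therefore assume \(h=h^*\) is decreasing and \(1\)-Lipschitz, so \(h'\in[-1,0]\) almost everywhere.

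The second step is a single-crossing argument. The function \(g_v\) is the clipped line \([q_S(v)-t]_0^1\): it equals \(1\) on an initial interval, has slope \(-1\) in the middle, and equals \(0\) at the end. Set \(d:=g_v-h\). On the region where \(g_v=1\), we have \(d\ge0\) because \(h\le1\). On the middle region, \(d'=-1-h'\le0\), so \(d\) is decreasing. On the final region where \(g_v=0\), \(d=-h\le0\). Hence \(d\) changes sign at most once, from \(\ge0\) to \(\le0\), at some point \(t_0\).

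Consequently \(D(x):=\int_0^x d\) increases on \([0,t_0]\) and decreases on \([t_0,1]\). Since \(D(0)=0\) and \(D(1)=0\) (equal integrals), we get \(D\ge0\) on \([0,1]\), which is the required majorization. The degenerate cases need a brief check: \(q_S(v)\le1\) (no initial plateau) and \(q_S(v)\ge1\) (no final zero region) both follow from the same argument.

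The main obstacle is the rearrangement step. It must be carried out rigorously, including the boundary effects on \([0,1]\), since the Lipschitz bound must survive rearrangement on a bounded interval. A cleaner alternative that avoids rearrangement altogether is to test the convex order directly on the functions \(\varphi(y)=(y-c)_+\). For these it suffices to show
\[
\lambda(\{h>c+s\})\le \lambda(\{g_v>c+s\}),
\]
integrated suitably, via the distribution-function single-crossing criterion of Karlin--Novikoff. For this one needs that \(F_{h(U)}\) and \(F_{g_v(U)}\) cross once. This again follows from the level-set growth bound \(m(y)-m(y')\ge y'-y\), together with the fact that \(g_v\) attains this bound with equality.
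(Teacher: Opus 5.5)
Your proposal is correct and follows the paper's proof essentially step for step. Both arguments reduce the claim to majorization of decreasing rearrangements via Hardy--Littlewood--P\'olya, use that decreasing rearrangement preserves the $1$-Lipschitz property (the paper simply cites Yanagihara for this), and then run the same three-region single-crossing comparison of $h^\downarrow$ with $g_v$ together with equal integrals.

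One small fix to your optional sketch of the rearrangement step: the bound $m(y)-m(y')\ge y'-y$ can fail when $\{h>y\}$ has full measure. For example, $h(t)=(1+t)/2$ with $y=0.1$, $y'=0.75$ gives $m(y)-m(y')=0.5<0.65$. The correct statement is $m(y)\ge\min\{1,\,m(y')+(y'-y)\}$ whenever $\{h>y'\}\neq\emptyset$, and this capped version still yields that $h^*$ is $1$-Lipschitz.
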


\begin{proof}
For a measurable function \(f\colon [0,1]\to [0,1]\), we denote by \(f^\downarrow\) its decreasing rearrangement, i.e., its (essentially with respect to \(\lambda\)) uniquely determined decreasing function \(f^\downarrow\) such that \(\lambda(f\leq t ) = \lambda(f^\downarrow\leq t)\) for all \(t\in [0,1]\).
Then \(f(U)\eqd f^\downarrow(U)\) for \(U\sim U(0,1)\). 
It is well known that the decreasing rearrangement does not increase the
Lipschitz constant; see, e.g., \citet[Corollary 2.2]{Yanagihara-1993}. Hence, \(h\) satisfies
\begin{align}\label{eq:rearr_Lip}
    |h^\downarrow(u) - h^\downarrow(v)| \leq |u-v| \qquad \text{for all } u,v\in [0,1].
\end{align}
Now, we compare \(h^\downarrow\) with \(g_v\). On \(\{t\colon g_v(t)=1\}\), we have \(h^\downarrow-g_v\le0\). 
On the interval \(\{0<g_v<1\}\), the function \(g_v\) has slope \(-1\), while \eqref{eq:rearr_Lip} implies that \(h^\downarrow(u)+u\) is nondecreasing. Hence \(h^\downarrow-g_v\) is nondecreasing on that interval. Finally, on \(\{g_v=0\}\), one has \(h^\downarrow-g_v\ge0\). Therefore \(h^\downarrow-g_v\) has at most one sign change, necessarily from nonpositive to nonnegative.

By assumption and Lemma~\ref{lem:g_v}, the two functions \(h\) and \(g_v\) have the same integral. The sign change property consequently yields
\begin{align}\label{eq:majorization_integral}
    \int_0^s h^\downarrow(t)\,\de t
    \le
    \int_0^s g_v(t)\,\de t,
    \qquad 0\le s\le1,
\end{align}
with equality at \(s=1\). By the Hardy-Littlewood-Polya theorem \cite[Theorem 3.21]{Rueschendorf-2013}, the majorization in \eqref{eq:majorization_integral} is equivalent to the inequality
\begin{align*}
    \E\varphi(h^\downarrow(U)) = \int_0^1 \varphi(h^\downarrow(t))\de t \leq \int_0^1 \varphi(g_v(t))\de t = \E \varphi(g_v(U)) \quad \text{f.a. convex }\varphi,
\end{align*}
where \(U\sim \cU(0,1)\). This implies the statement.
\end{proof}

\begin{remark} The clipped-affine extremizer \(g_v\) in \eqref{eq:g_v} is the uniform analogue of the isoperimetric extremizers used in the Gaussian argument of \citet{Schrott-2026}. \end{remark}

We next consider a smoothing operation generated by the diagonal transport. To this end, we define for measurable \(f\colon[0,1]\to[0,1]\) the function
\begin{align}\label{eq:K_def}
    (Kf)(t)
    :=\int_0^1 f\bigl(F_S(t+r)\bigr)\,\de r = \E[f(X)\mid U=t],
    \qquad t\in[0,1].
\end{align}
The following lemma shows that the operator \(K\) maps \([0,1]\)-valued measurable functions into \(1\)-Lipschitz functions.

\begin{lemma}[Markov smoothing]\label{lem:Markov_smoothing}
For \(v\in [0,1]\), let \(f\colon[0,1]\to[0,1]\) be measurable with \(\int_0^1f(t)\,\de t=v\).
Then \(Kf\) in \eqref{eq:K_def} is \(1\)-Lipschitz and satisfies
\begin{align}\label{eq:K_mean}
    \int_0^1(Kf)(t)\,\de t=v.
\end{align}
\end{lemma}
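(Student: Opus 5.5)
The plan is to handle the mean identity \eqref{eq:K_mean} and the Lipschitz property separately; both reduce to elementary manipulations of the defining integral \eqref{eq:K_def}.

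\emph{Mean identity.} By \eqref{eq:K_def}, \((Kf)(t)=\E[f(X)\mid U=t]\) with \(X=F_S(U+R)\), where \(U,R\) are independent and uniform. Integrating over \(t\in[0,1]\) and applying the tower property gives
\[
\int_0^1 (Kf)(t)\,\de t=\E[f(X)].
\]
Since \(X\sim\cU(0,1)\), this equals \(\int_0^1 f(x)\,\de x=v\), which proves \eqref{eq:K_mean}.

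\emph{Lipschitz property.} I would substitute \(s=t+r\), which turns the defining integral into an integral over a sliding window:
\[
(Kf)(t)=\int_t^{t+1}\phi(s)\,\de s,\qquad \phi:=f\circ F_S .
\]
Here \(\phi\) is measurable, defined on \([0,2]\), and takes values in \([0,1]\). For \(0\le t<t'\le 1\), the two windows share the overlap \([t',t+1]\), so
\[
(Kf)(t')-(Kf)(t)=\int_{t+1}^{t'+1}\phi(s)\,\de s-\int_t^{t'}\phi(s)\,\de s .
\]
Each of the two integrals lies in \([0,t'-t]\) because \(0\le\phi\le1\). Their difference therefore has absolute value at most \(t'-t\), which is the \(1\)-Lipschitz bound.

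The only point needing care is that the constant comes out as \(1\) rather than \(2\). Bounding the two boundary pieces separately would give \(2|t'-t|\). The sharp constant follows because both pieces are nonnegative and each is bounded by \(|t'-t|\), so they cannot add up in the difference. This is also the step where the standing assumption that \(f\) takes values in \([0,1]\), and not just \([-1,1]\), is used. Measurability and integrability of \(\phi\) are clear, since \(F_S\) is continuous and \(f\) is bounded.
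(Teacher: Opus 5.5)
Your proposal is correct and follows the paper's proof essentially verbatim: the substitution \(s=t+r\) reduces the increment \((Kf)(t')-(Kf)(t)\) to a difference of two boundary integrals, each lying in \([0,t'-t]\), which gives the \(1\)-Lipschitz bound. The mean identity follows from Fubini (equivalently, the tower property) together with \(X\sim\cU(0,1)\).
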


\begin{proof}
For \(0\le t<t'\le1\), the substitution \(s=t+r\) gives
\begin{align}
    \label{int112} (Kf)(t')-(Kf)(t)
    &=\int_{t'}^{t'+1}f(F_S(s))\,\de s
      -\int_t^{t+1}f(F_S(s))\,\de s\\
    \label{int223}&=\int_{t+1}^{t'+1}f(F_S(s))\,\de s
      -\int_t^{t'}f(F_S(s))\,\de s.
\end{align}
Each integral in \eqref{int223} belongs to \([0,t'-t]\), because \(0\le f\le1\). Hence
\[
    \abs{(Kf)(t')-(Kf)(t)}\le t'-t.
\]
Thus \(Kf\) is one-Lipschitz.
Further, by Fubini's theorem and \(X=F_S(U+R)\sim\cU(0,1)\), we have
\[
    \int_0^1(Kf)(t)\,\de t
    =\E[f(X)]
    =\int_0^1f(x)\,\de x
    =v.
\]
This proves \eqref{eq:K_mean}.
\end{proof}

\section{Construction of suitable transport couplings}
\label{sec:couplings}

We now use the results of the previous sections to construct, for an
arbitrary target copula \(C\), a transport coupling from \(\Pi\) to \(C\)
whose quadratic cost does not exceed the optimal transport cost from
\(\Pi\) to \(M\).

To this end, recall that \(U,R\sim\cU(0,1)\) are independent and that
the diagonal transport
\begin{align}\label{eq_constrX}
    (U,R)\mapsto (X,X),
    \qquad
    X:=F_S(U+R),
    \qquad
    S=U+R,
\end{align}
is optimal from \(\Pi\) to \(M\).

Now let \(C\in\cC\) be arbitrary and let
\(\Lambda,\Theta\sim\cU(0,1)\) be independent of each other and jointly
independent of \((U,R)\). Define
\begin{align}
    Y&:=F_{C,X}^{-1}(\Lambda),
    \label{eq:Y_cond_quantile}\\
    V&:=H_U^-(Y)
       +\Theta\bigl(H_U(Y)-H_U^-(Y)\bigr),
    \label{eq:V_cond_transform}
\end{align}
where 
\begin{align}\label{def_H_u}
    (H_u^-(y):=\mathbb P(Y<y\mid U=u) \qquad \text{and} \qquad H_u(y):=\mathbb P(Y\le y\mid U=u).
\end{align}
The random vector \((U,V,X,Y)\) has the following properties.

\begin{lemma}[Construction of a suitable transport coupling]
\label{lem_coupling}
For the random vector \((U,V,X,Y)\) constructed above, we have
\begin{enumerate}[label=(\roman*)]
    \item \label{lem_coupling1} \((U,V)\sim\Pi\),
    \item \label{lem_coupling2} \((X,Y)\sim C\),
    \item \label{lem_coupling3} \(Y\) and \((U,R)\) are conditionally
          independent given \(X\),
    \item \label{lem_coupling4} \(Y\) and \(V\) are comonotone
          conditionally on \(U\),
    \item \label{lem_coupling5} \(X = F_{X|U}^{-1}(R)\) and \(Y = F_{Y|U}^{-1}(V)\) \(\mathbb{P}\)-almost surely.
\end{enumerate}
In particular, \((U,V)\mapsto(X,Y)\)
defines a transport coupling from \(\Pi\) to \(C\).
\end{lemma}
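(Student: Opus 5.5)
I will verify the five items in the order (ii), (iii), (i)/(iv), (v), since (i) and (iv) depend on the conditional law of \(Y\) given \(U\), and that law is controlled by (iii).

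\emph{Items (ii) and (iii).} The variable \(X=F_S(U+R)\) is a measurable function of \((U,R)\). The variable \(\Lambda\) is independent of \((U,R)\), and \(X\sim\cU(0,1)\). Hence \(\Lambda\) is independent of \(X\), and Lemma~\ref{lem:conditional_quantile_construction} applied to the pair \((X,\Lambda)\) gives \((X,Y)\sim C\). For (iii), write \(Y=\psi(X,\Lambda)\) with \(\psi(x,\lambda)=F_{C,x}^{-1}(\lambda)\). We may take \(\psi\) jointly measurable, because \(F_{C,x}\) comes from a Markov kernel. Since \(\Lambda\) is independent of \((U,R)\), conditioning on \((U,R)\) gives, for every bounded \(\phi\),
\[
\E[\phi(Y)\mid U,R]=\int_0^1\phi(\psi(X,\lambda))\,\de\lambda .
\]
The right-hand side is \(\sigma(X)\)-measurable, so it also equals \(\E[\phi(Y)\mid X]\). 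This is exactly conditional independence of \(Y\) and \((U,R)\) given \(X\).

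\emph{Items (i) and (iv).} The vector \((U,Y)\) is measurable with respect to \(\sigma(U,R,\Lambda)\), and \(\Theta\) is independent of it. Conditionally on \(U=u\), the variable \(V\) is therefore the randomized distributional transform of \(Y\) with respect to its conditional distribution function \(H_u\).

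By the classical property of the distributional transform (Rüschendorf), \(V\mid U=u\sim\cU(0,1)\) for \(\lambda\)-a.e.\ \(u\). The conditional law does not depend on \(u\), so \(V\) is independent of \(U\) and uniform, which proves (i).

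The same classical result gives \(H_u^{-1}(V)=Y\) a.s.\ conditionally on \(U=u\). Thus \(Y=F_{Y|U}^{-1}(V)\) a.s., which is the second half of (v). Moreover, conditionally on \(U\), both \(V\) and \(Y=H_U^{-1}(V)\) are nondecreasing functions of the single variable \(V\), which is the comonotonicity in (iv).

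The step I expect to need the most care is the measurability bookkeeping. One must choose a jointly measurable version \((u,y)\mapsto H_u(y)\), via a regular conditional distribution of \(Y\) given \(U\), so that \(V\) is a random variable. One must also state the conditional version of the distributional transform properly: apply the unconditional result for each fixed \(u\), using independence of \(\Theta\) from \((U,Y)\), and then integrate over \(u\).

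\emph{First half of (v).} Lemma~\ref{lem:g_v} gives \(F_{X|U=t}(x)=[q_S(x)-t]_0^1\). On the set where \(0<x<1\) and \(0<q_S(x)-t<1\), this function is continuous and strictly increasing in \(x\). A direct computation of the generalized inverse then shows \(F_{X|U=t}^{-1}(r)=F_S(t+r)\) for \(r\in(0,1)\). Hence \(F_{X|U}^{-1}(R)=F_S(U+R)=X\) holds identically, not merely almost surely.

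\emph{Final claim.} By (i), \((U,V)\sim\Pi\), and by (ii), \((X,Y)\sim C\). Therefore the joint law of \(((U,V),(X,Y))\) lies in \(\Gamma(\pi_\Pi,\pi_C)\), i.e.\ it is a transport coupling from \(\Pi\) to \(C\).
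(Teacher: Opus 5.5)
Your proposal is correct and follows essentially the same route as the paper. You get (ii) from the conditional quantile construction. You get (i), (iv) and the second half of (v) from R\"uschendorf's randomized distributional transform applied conditionally on \(U=u\), and the first half of (v) from the strict monotonicity of \(r\mapsto F_S(u+r)\).

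The differences are minor. You prove (iii) via the freezing identity \(\E[\phi(Y)\mid U,R]=\int_0^1\phi(\psi(X,\lambda))\,\de\lambda\), where the paper first shows \(\Lambda\perp(U,R)\mid X\). You also check the quantile identity explicitly from Lemma~\ref{lem:g_v}, and you flag the measurability of \(x\mapsto F_{C,x}^{-1}\) and \((u,y)\mapsto H_u(y)\), which the paper leaves implicit.
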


\begin{proof}
Since \(X\) is measurable with respect to \((U,R)\) and
\(\Lambda\) is independent of \((U,R)\), the random variables
\(X\) and \(\Lambda\) are independent. Hence,
Lemma~\ref{lem:conditional_quantile_construction} yields
\[
    (X,Y)
    =
    \bigl(X,F_{C,X}^{-1}(\Lambda)\bigr)
    \sim C,
\]
which proves \ref{lem_coupling2}.

Since \(X\) is measurable with respect to \(\sigma(U,R)\) and
\(\Lambda\) is independent of \((U,R)\), we have
\[
    \Lambda\perp (U,R)\mid X.
\]
Indeed, for bounded measurable \(f\) and \(g\), we have
\begin{align*}
\E[f(\Lambda)g(U,R)\mid X]
&=
\E\!\left[
    \E\!\left[f(\Lambda)g(U,R)\mid U,R\right]
    \,\middle|\, X
\right] 
=
\E\!\left[
    g(U,R)\,
    \E\!\left[f(\Lambda)\mid U,R\right]
    \,\middle|\, X
\right] \\
&=
\E\!\left[
    g(U,R)\,
    \E[f(\Lambda)]
    \,\middle|\, X
\right] 
=
\E[f(\Lambda)]\,
\E[g(U,R)\mid X] \\
&=
\E[f(\Lambda)\mid X]\,
\E[g(U,R)\mid X],
\end{align*}
where we use for the first equality that \(X\) is a function of \((U,R)\). For the third equality, we use that \(\Lambda\) is independent of \((U,R)\), and thus also of \(X\), which yields the last equality. 
Now, since \(Y=F_{C,X}^{-1}(\Lambda)\) is a measurable function of
\((X,\Lambda)\), we obtain
\[
    Y\perp (U,R)\mid X,
\]
which proves \ref{lem_coupling3}.

Next, by the properties of the randomized distributional transform in \citet[Section~3]{Rueschendorf-2009}, \(V\) defined in \eqref{eq:V_cond_transform}
satisfies
\[
    (V\mid U=u)\sim\cU(0,1)
\]
for \(\lambda\)-almost all \(u\in[0,1]\). Since this conditional
distribution does not depend on \(u\), \(V\) is independent of \(U\).
This proves \ref{lem_coupling1}.

Denote the generalized inverse of the conditional distribution function \(H_u\) in \eqref{def_H_u} by \(H_u^{-1}(v) := \inf\{y\in [0,1]\colon H_u(y)\geq v\}\) for \(v\in [0,1]\). Then the randomized conditional quantile transform satisfies
\begin{align*}
    Y = H_U^{-1}(V) \qquad \mathbb{P}\text{-almost surely};
\end{align*}
see \citet[Section 3]{Rueschendorf-2009}.
Since, for every fixed \(u\), the generalized inverse
\(v\mapsto H_u^{-1}(v)\) is nondecreasing, \(Y\) and \(V\)
are comonotone conditionally on \(U\), proving
\ref{lem_coupling4}.

To prove \ref{lem_coupling5}, first fix \(U=u\). Since
\(X=F_S(u+R)\) with \(R\sim\cU(0,1)\), and the map
\(r\longmapsto F_S(u+r)\)
is strictly increasing, it is the conditional quantile function of
\(X\) given \(U=u\). Hence
\[
    X=F_{X\mid U}^{-1}(R)
    \qquad \mathbb P\text{-almost surely}.
\]
Moreover, by the randomized distributional transform used in
\eqref{eq:V_cond_transform},
\[
    Y=\tilde H_U^{-1}(V)
    \qquad \mathbb P\text{-almost surely},
\]
where \(\tilde H_u=F_{Y\mid U=u}\). Therefore,
\(Y=F_{Y\mid U}^{-1}(V)\) \(\mathbb P\text{-almost surely}\);
see \citet[Section~3]{Rueschendorf-2009}.

Finally, \ref{lem_coupling1} and \ref{lem_coupling2} show that the
source vector \((U,V)\) has copula \(\Pi\), whereas the target vector
\((X,Y)\) has copula \(C\). Hence their joint law defines a transport
coupling from \(\Pi\) to \(C\).
\end{proof}

\begin{proposition}[Conditional convex-order comparison]
\label{lem:ccx_comparison}
For the coupling \((U,V,X,Y)\) constructed in \eqref{eq_constrX}--\eqref{eq:V_cond_transform}, we have
\begin{align}\label{eq_lem:ccx_comparison}
    (Y,U)\preccurlyeq_{\mathrm{ccx}}(X,U).
\end{align}
\end{proposition}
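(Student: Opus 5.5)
By the definition of the conditional convex order in \eqref{eq:ccx_special}, we must show that for every \(v\in[0,1]\)
\[
F_{Y\mid U}(v)\le_{\mathrm{cx}}F_{X\mid U}(v).
\]
Here \(F_{Y\mid U}(v)=h_v(U)\) with \(h_v(t):=\mathbb P(Y\le v\mid U=t)\), and \(F_{X\mid U}(v)=g_v(U)\) by Lemma~\ref{lem:g_v}. The plan is therefore to identify \(h_v\) as a smoothed function \(Kf_v\) and then combine Lemma~\ref{lem:Markov_smoothing} with Lemma~\ref{lem:Lip_cx}.

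\textbf{Step 1: \(h_v=Kf_v\).} Fix \(v\) and set \(f_v(x):=\mathbb P(Y\le v\mid X=x)=F_{C,x}(v)\); this is a measurable \([0,1]\)-valued function of \(x\). We use the conditional independence of \(Y\) and \((U,R)\) given \(X\) from Lemma~\ref{lem_coupling}\ref{lem_coupling3}. Since \(\sigma(X)\subseteq\sigma(U,R)\), it gives
\[
\mathbb P(Y\le v\mid U,R)=\mathbb P(Y\le v\mid X)=f_v(X).
\]
Taking the tower property over \(\sigma(U)\) yields \(\mathbb P(Y\le v\mid U)=\E[f_v(X)\mid U]\). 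Because \(X=F_S(U+R)\) with \(R\) independent of \(U\), this equals \((Kf_v)(U)\) by \eqref{eq:K_def}. Hence \(h_v=Kf_v\) for \(\lambda\)-almost every \(t\).

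\textbf{Step 2: mean and Lipschitz property.} By the disintegration of \(C\) and \(Y\sim\cU(0,1)\),
\[
\int_0^1 f_v(x)\,\de x=\mathbb P(Y\le v)=v .
\]
Lemma~\ref{lem:Markov_smoothing} then shows that \(Kf_v\) is \(1\)-Lipschitz, takes values in \([0,1]\), and has \(\int_0^1 Kf_v=v\).

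\textbf{Step 3: convex order.} Lemma~\ref{lem:Lip_cx} applies directly and gives \((Kf_v)(U)\le_{\mathrm{cx}}g_v(U)\). Since \(h_v(U)=(Kf_v)(U)\) almost surely, this is exactly \(F_{Y\mid U}(v)\le_{\mathrm{cx}}F_{X\mid U}(v)\). As \(v\) was arbitrary, \eqref{eq_lem:ccx_comparison} follows. The definition also requires uniform marginals, and indeed \(U,X,Y\sim\cU(0,1)\).

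\textbf{Main obstacle.} The delicate point is the measure-theoretic care in Step 1. The conditional distribution function \(F_{Y\mid U=t}(v)\) is only defined \(\lambda\)-a.e. and must be represented by the specific version \(Kf_v\), which is continuous. The convex-order statement depends only on the law of \(h_v(U)\), so modifications on null sets are harmless. The care lies in checking that \(f_v\) is defined pointwise through the Markov kernel \(K_C\), so that \(Kf_v\) is a genuine function on \([0,1]\) to which Lemma~\ref{lem:Markov_smoothing} applies.
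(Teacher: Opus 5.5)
Your proposal is correct and follows the paper's proof step by step. You identify \(F_{Y\mid U=t}(v)\) with \((Kf_v)(t)\) using the conditional independence in Lemma~\ref{lem_coupling}\,\ref{lem_coupling3} and the tower property, and then apply Lemma~\ref{lem:Markov_smoothing} (via \(\int_0^1 f_v=v\)) followed by Lemma~\ref{lem:Lip_cx}; your choice of the pointwise version \(f_v(x)=F_{C,x}(v)\), valid because \(Y=F_{C,X}^{-1}(\Lambda)\) with \(\Lambda\) independent of \(X\), is a clean way to settle the version issue you flag.
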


\begin{proof}
For \(v\in[0,1]\), define the function
\begin{align}\label{def_f_v}
    f_v(x):= F_{Y|X=x}(v),
    \qquad x\in[0,1].
\end{align}
Since \((X,Y)\sim C\), both \(X\) and \(Y\)
are uniform on \((0,1)\), and thus
\begin{align}\label{eq:fv_mean}
    \int_0^1 f_v(x)\,\de x
    =
    \mathbb{P}(Y\le v)
    =
    v.
\end{align}
By the conditional independence assumption~(ii), we have
\[
    \mathbb{P}(Y\le v\mid U,R,X)
    =
    \mathbb{P}(Y\le v\mid X)
    =
    f_v(X)
    \qquad \mathbb{P}\text{-almost surely.}
\]
Taking conditional expectations with respect to \(U\) yields
\begin{align}
    \mathbb{P}(Y\le v\mid U)
    &=
    \E\!\left[
        \mathbb{P}(Y\le v\mid U,R,X)
        \,\middle|\, U
    \right] \nonumber
    =    \E[f_v(X)\mid U].
\end{align}
Recall that \(X=F_S(U+R)\), where \(R\) is uniform and independent of
\(U\). Consequently, for \(t\in[0,1]\),
\begin{align}\label{eq:conditional_Kfv}
    F_{Y|U=t}(v) = \mathbb{P}(Y\le v\mid U=t)
    &=
    \int_0^1
        f_v\bigl(F_S(t+r)\bigr)\,\de r =
    (Kf_v)(t),
\end{align}
where \(K\) is the smoothing operator in \eqref{eq:K_def}.

Using \eqref{eq:fv_mean}, \(Kf_v\) is \(1\)-Lipschitz by Lemma~\ref{lem:Markov_smoothing}. Hence, Lemma \ref{lem:Lip_cx} gives
\begin{align}\label{eq:proof_ccx_K}
    (Kf_v)(U)
    \leq_{\mathrm{cx}}
    g_v(U).
\end{align}
Combining \eqref{eq:conditional_Kfv},
\eqref{eq:proof_ccx_K}, and the definition of \(g_v\) in \eqref{eq:g_v}, we obtain \(F_{Y|U}(v) \leq_{cx} F_{X|U}(v)\)
for all \(v\in[0,1]\). This proves \eqref{eq_lem:ccx_comparison}.
\end{proof}

The following result shows that the coupling \((U,V,X,Y)\) is suitable in the sense that its transport cost from \(\Pi\) to \(C\) does not exceed the cost of the optimal diagonal transport from \(\Pi\) to \(M\).

\begin{corollary}[Transport-cost comparison]\label{cor_transcost}
    For the coupling \((U,V,X,Y)\) constructed in \eqref{eq_constrX}--\eqref{eq:V_cond_transform}, we have
    \begin{align}\label{comp_VYXR}
        (Y,V)\geq_{sm} (X,R)
    \end{align}
    and thus
    \(\E[(V-Y)^2] \leq \E[(R-X)^2]\).
\end{corollary}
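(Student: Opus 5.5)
We combine Proposition~\ref{lem:ccx_comparison} with the characterization of the conditional convex order in Proposition~\ref{lem_2_8}, and then apply Lemma~\ref{lem:J_lower_orthant}. By Proposition~\ref{lem:ccx_comparison} we have \((Y,U)\preccurlyeq_{\mathrm{ccx}}(X,U)\), where all four variables are \(\cU(0,1)\) by Lemma~\ref{lem_coupling}. We apply Proposition~\ref{lem_2_8} with \(Z=Z'=U\), \(\xi=V\), and \(\xi'=R\). This choice is admissible: \(V\sim\cU(0,1)\) is independent of \(U\) by Lemma~\ref{lem_coupling}\ref{lem_coupling1}, and \(R\sim\cU(0,1)\) is independent of \(U\) by construction. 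Proposition~\ref{lem_2_8} then gives
\[
    \bigl(F_{Y|U}^{-1}(V),V\bigr)\geq_{sm}\bigl(F_{X|U}^{-1}(R),R\bigr).
\]

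We next identify the two sides with \((Y,V)\) and \((X,R)\). Lemma~\ref{lem_coupling}\ref{lem_coupling5} states that \(X=F_{X|U}^{-1}(R)\) and \(Y=F_{Y|U}^{-1}(V)\) almost surely. The left-hand vector is therefore \((Y,V)\) and the right-hand vector is \((X,R)\). This yields \eqref{comp_VYXR}.

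One point needs care. In Proposition~\ref{lem_2_8}, \(\xi\) is only required to be some uniform variable independent of \(Z\), and the supermodular statement concerns the law of \((F_{Y|Z}^{-1}(\xi),\xi)\). That law is determined by the joint law of \((Z,\xi)\), which is \(\Pi\) in both cases, together with the conditional quantile function. Hence it does not matter that \(V\) is constructed from \(Y\) rather than chosen independently beforehand. The only facts used are that \((U,V)\sim\Pi\) and that the representation holds almost surely. I expect this identification to be the main subtle step, although it is essentially bookkeeping once Lemma~\ref{lem_coupling} is available.

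For the cost inequality, all of \(Y,V,X,R\) are \(\cU(0,1)\), and \((Y,V)\geq_{sm}(X,R)\). Lemma~\ref{lem:J_lower_orthant} therefore applies directly and gives \(\E[(Y-V)^2]\le\E[(X-R)^2]\), which is the claim. Alternatively, \((y,z)\mapsto -(y-z)^2\) is supermodular and bounded on \([0,1]^2\), so the inequality also follows immediately from the definition of \(\le_{sm}\).
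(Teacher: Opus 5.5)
Your proposal is correct and follows the paper's proof essentially step for step. You combine Proposition~\ref{lem:ccx_comparison} with Proposition~\ref{lem_2_8} for $Z=Z'=U$, $\xi=V$, $\xi'=R$, use Lemma~\ref{lem_coupling}\,\ref{lem_coupling5} to identify the two sides with $(Y,V)$ and $(X,R)$, and finish with Lemma~\ref{lem:J_lower_orthant}. Your remark that the law of $(F_{Y|U}^{-1}(V),V)$ depends only on $(U,V)\sim\Pi$ makes explicit a point the paper leaves implicit, and your alternative final step via the bounded supermodular function $(y,z)\mapsto-(y-z)^2$ is equally valid.
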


\begin{proof}
    By Lemma \ref{lem_coupling}, \(Y\) and \(V\) are comonotone conditionally on \(U\). Further, by definition of \(X\) in \eqref{eq_constrX}, \(X\) and \(R\) are comonotone conditionally on \(U\). By Proposition \ref{lem:ccx_comparison}, we have \((Y,U)\preccurlyeq_{ccx} (X,U)\). Hence, the characterization of the conditional convex order by the supermodular comparison of conditionally comonotone random vectors in Proposition \ref{lem_2_8} gives
    \begin{align*}
        (Y,V) = (F_{Y|U}^{-1}(V),V) \geq_{sm} (F_{X|U}^{-1}(R),R) = (X,R).
    \end{align*}
    Both equalities hold true by Lemma \ref{lem_coupling}\,\ref{lem_coupling5}.
    Then the statement follows from Lemma \ref{lem:J_lower_orthant}.
\end{proof}

\section{Proofs of Section \ref{sec:intro}}\label{sec:main}

The following three lemmas are needed to establish the equality cases in Relation \eqref{eq:main_intro}.

\begin{lemma}[\(1\)-Lipschitz maps preserving the uniform law]\label{lem:uniform_rigidity}
Let \(h\colon[0,1]\to[0,1]\) be \(1\)-Lipschitz, and assume that \(U\) and \(h(U)\) are uniform on \((0,1)\). Then either \(h(u)=u\) for all \(u\), or \(h(u)=1-u\) for all \(u\).
\end{lemma}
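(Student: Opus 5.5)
The plan is to pass to the decreasing (or increasing) rearrangement and compare with the identity, using that a $1$-Lipschitz map cannot compress mass. Since $h$ is $1$-Lipschitz, it is absolutely continuous with $\abs{h'}\le 1$ almost everywhere. Write $N(y):=\#\{u\in[0,1]\colon h(u)=y\}$ for the multiplicity function. The area (Banach indicatrix) formula for absolutely continuous functions gives, for every Borel set $B\subseteq[0,1]$,
\[
    \int_{h^{-1}(B)}\abs{h'(u)}\,\de u=\int_B N(y)\,\de y .
\]

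Uniformity of $h(U)$ says $\lambda(h^{-1}(B))=\lambda(B)$. Combining this with the area formula and $\abs{h'}\le 1$ yields
\[
    \int_B N(y)\,\de y=\int_{h^{-1}(B)}\abs{h'}\,\de u\le \lambda(h^{-1}(B))=\lambda(B).
\]
So $N\le 1$ almost everywhere.

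On the other hand, $h$ is continuous and $h(U)$ is uniform, so the image $h([0,1])$ is a compact interval of full measure, hence equal to $[0,1]$. Therefore $N\ge 1$ on $[0,1]$, and so $N=1$ almost everywhere. Plugging this back in gives
\[
    \int_{h^{-1}(B)}(1-\abs{h'})\,\de u=0 \qquad\text{for } B=[0,1],
\]
so $\abs{h'}=1$ almost everywhere.

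The main obstacle is turning "$N=1$ a.e." and "$\abs{h'}=1$ a.e." into monotonicity of $h$. I would argue as follows. Suppose $h$ is not monotone. Then there are $a<b<c$ with $h(b)>\max\{h(a),h(c)\}$, or the analogous configuration with a minimum. By continuity, every $y$ in the nondegenerate interval $(\max\{h(a),h(c)\},h(b))$ has at least two preimages, one in $(a,b)$ and one in $(b,c)$. This contradicts $N=1$ almost everywhere.

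Hence $h$ is monotone and surjective onto $[0,1]$. Its total variation is then $1$, which must equal $\int_0^1\abs{h'}\,\de u$. Together with $\abs{h'}=1$ a.e. and monotonicity, $h'\equiv 1$ or $h'\equiv -1$ almost everywhere. Absolute continuity and surjectivity then give $h(u)=u$ or $h(u)=1-u$ for all $u$.

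As an alternative that avoids the area formula, the $\abs{h'}=1$ step can be replaced by a cleaner route once monotonicity is established. If $h$ is nondecreasing and pushes $\lambda$ to $\lambda$, then $h$ equals the quantile-type identity almost everywhere, hence everywhere by continuity; the nonincreasing case is symmetric. The monotonicity step would still rest on the multiplicity bound $N\le 1$ derived above.
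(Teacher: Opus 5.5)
Your proof is correct, but it takes a much heavier route than the paper. The paper uses the uniform law only to get surjectivity: the continuous image \(h([0,1])\) is a compact interval carrying the uniform law, so it equals \([0,1]\). You carry out this same step. Pick \(u_0,u_1\) with \(h(u_0)=0\) and \(h(u_1)=1\). The Lipschitz bound gives \(1=\abs{h(u_1)-h(u_0)}\le\abs{u_1-u_0}\le 1\), so \(\{u_0,u_1\}=\{0,1\}\). If \(h(0)=0\) and \(h(1)=1\), then \(h(u)=\abs{h(u)-h(0)}\le u\) and \(1-h(u)=\abs{h(1)-h(u)}\le 1-u\), which forces \(h(u)=u\). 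The other case gives \(1-u\).

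So the lemma in fact holds for every \(1\)-Lipschitz surjection of \([0,1]\) onto itself. Everything after your surjectivity step could be replaced by these two lines: the area formula, \(N\le1\), \(\abs{h'}=1\) a.e., the three-point monotonicity argument, and the integration of \(h'\).

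Your argument instead uses the measure preservation in full and yields extra structure: \(h\) is a.e.\ injective with \(\abs{h'}=1\) a.e. That kind of local-isometry information could still be available where the endpoint rigidity the paper exploits is missing, for example for maps defined piecewise on several intervals. The cost is invoking the Banach indicatrix/area formula rather than just continuity and the Lipschitz bound.

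Finally, your opening sentence announces a rearrangement argument that you never use, so it should be removed.
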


\begin{proof}
Continuity and \(h(U)\sim \cU(0,1)\) imply that the range of \(h\) is the entire interval \([0,1]\). Now, choose \(u_0,u_1\) with \(h(u_0)=0\) and \(h(u_1)=1\). The Lipschitz property of \(h\) yields \(1\le|u_1-u_0|\le1\). Hence, either \(u_0 = 0\) and \(u_1=1\), or \(u_0=1\) and \(u_1=0\). 
In the first case, the Lipschitz property of \(h\) gives \(h(u)\leq u\) and \(h(u)\geq u\), so \(h(u) = u\). In the second case, we obtain \(h(u)=1-u\).
\end{proof}

Recall that \(U,R\sim \cU(0,1)\) are independent, \(X = F_S(U+R)\), and that \(g_v = \mathbb{P}(X\leq v\mid U=\cdot)\) in \eqref{eq:g_v} describes the conditional distribution of \(X\) given \(U\). 

\begin{lemma}[Representation of \(Y| U=t\)]
\label{lem:repYU}
Let \(X\) and \(Y\) be constructed by \eqref{eq_constrX} and \eqref{eq:Y_cond_quantile}.
For \(v\in[0,1]\), define
\begin{align}\label{def_h_v}
    h_v(u):= (K f_v)(u) = \int_0^1 f_v(F_S(u+r))\de r,
\end{align}
for \(f_v = F_{Y|X=\cdot}(v)\) in \eqref{def_f_v}. Then \(h_v\) is \(1\)-Lipschitz and, for all \(v\in [0,1]\),
\begin{align}\label{eq_rep_hv42}
    h_v(t) = F_{Y|U=t}(v) \qquad \text{for } \lambda\text{-almost all } t\in [0,1].
\end{align}
\end{lemma}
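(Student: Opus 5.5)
The plan is to reduce the statement to two facts already established: the Lipschitz property of the Markov smoothing operator, and the identification of the conditional law of \(Y\) given \(U\) obtained in the proof of Proposition~\ref{lem:ccx_comparison}. The main work is bookkeeping about null sets and versions.

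\emph{Lipschitz property.} Fix \(v\in[0,1]\) and fix a measurable version of \(f_v(x)=F_{Y|X=x}(v)\) with values in \([0,1]\). Since \((X,Y)\sim C\) by Lemma~\ref{lem_coupling}\,\ref{lem_coupling2}, relation \eqref{eq:fv_mean} gives \(\int_0^1 f_v(x)\,\de x=\mathbb P(Y\le v)=v\). Lemma~\ref{lem:Markov_smoothing} then shows that \(h_v=Kf_v\) is \(1\)-Lipschitz for every \(t\in[0,1]\), not just almost everywhere. A different version of \(f_v\) differs only on a \(\lambda\)-null set, and since \(F_S(t+\cdot)\) has a nonvanishing derivative, \(Kf_v\) is unchanged pointwise. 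So \(h_v\) is well defined as an everywhere-defined function.

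\emph{Identification.} By Lemma~\ref{lem_coupling}\,\ref{lem_coupling3}, conditional independence of \(Y\) and \((U,R)\) given \(X\) yields
\[
\mathbb P(Y\le v\mid U,R)=\mathbb P(Y\le v\mid U,R,X)=f_v(X)\quad \mathbb P\text{-a.s.},
\]
where the first equality uses that \(X\) is \(\sigma(U,R)\)-measurable. Conditioning further on \(U\) gives, by the tower property, \(\mathbb P(Y\le v\mid U)=\E[f_v(F_S(U+R))\mid U]\). Since \(R\) is independent of \(U\) and uniform, the freezing lemma evaluates this as
\[
\mathbb P(Y\le v\mid U)=\int_0^1 f_v(F_S(U+r))\,\de r=h_v(U)\quad \text{a.s.}
\]
Hence \(h_v\) is a version of \(t\mapsto F_{Y|U=t}(v)\), which gives \eqref{eq_rep_hv42} for \(\lambda\)-almost every \(t\). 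The exceptional set may depend on \(v\), which the statement allows.

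\emph{Main obstacle.} The only delicate point is the meaning of \(F_{Y|U=t}(v)\). It is defined only \(\lambda\)-a.e. for each \(v\), and the argument goes through once it is read as any version of the conditional probability. Because each \(h_v\) is continuous, one could additionally note that \(v\mapsto h_v(t)\) is nondecreasing and right-continuous for every \(t\). This follows from monotone convergence, provided \(f_v\) is taken from a regular conditional distribution such as \(F_{C,x}(v)\). That observation is needed only if a regular version is wanted later, and I expect it to be routine.
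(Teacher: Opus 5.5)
Your proposal is correct and follows essentially the same route as the paper. The Lipschitz property comes from Lemma~\ref{lem:Markov_smoothing}, and the identification $h_v(U)=\mathbb P(Y\le v\mid U)$ comes from the conditional independence in Lemma~\ref{lem_coupling}\,\ref{lem_coupling3}, the tower property, and integrating out the independent uniform $R$. Your additional check that $Kf_v$ does not depend on the chosen version of $f_v$ (since $r\mapsto F_S(t+r)$ pulls back null sets to null sets) is a harmless refinement. The paper instead simply fixes a regular version with $v\mapsto f_v(x)$ nondecreasing, which it later needs in Lemma~\ref{lem:smoothing_rigidity}.
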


\begin{proof}
By the existence of regular conditional distributions, we may choose
versions
\begin{align*}
    f_v(x)=\mathbb P(Y\le v\mid X=x),
    \qquad v\in[0,1],
\end{align*}
such that, for every \(x\in[0,1]\), the map
\(v\mapsto f_v(x)\) is nondecreasing; see, e.g.,
\citet[Theorem~6.3]{Kallenberg-2002}.
Recall that \(K\) is the smoothing operator in \eqref{eq:K_def}.
By Lemma \ref{lem_coupling}\,\ref{lem_coupling3}, \(Y\) and \(U\) are conditionally independent given \(X\). This yields
\begin{align*}
    h_v(u)
    &= \int_0^1 f_v(F_S(u+r))\,\de r 
    = \E[f_v(X)\mid U=u] 
    = \E\!\left[
        \mathbb P(Y\le v\mid X,U)
        \,\middle|\, U=u
      \right] \\
    &= \mathbb P(Y\le v\mid U=u) 
    = F_{Y\mid U=u}(v),
\end{align*}
where we use for the third equality conditional independence of \(Y\) and \(U\) given \(X\) due to Lemma \ref{lem_coupling}\,\ref{lem_coupling3}. The \(1\)-Lipschitz property of \(h_v\) follows from Lemma \ref{lem:Markov_smoothing}.
\end{proof}

Recall that \(h_v\) and \(g_v\) defined in \eqref{def_h_v} and \eqref{eq:g_v} describe the conditional distribution of \(Y\) given \(U\) and \(X\) given \(U\), respectively.

\begin{lemma}[Equality cases]
\label{lem:smoothing_rigidity}
Suppose that
\begin{align}\label{eq:smoothing_equimeasurability}
    h_v(U)\eqd g_v(U)
    \qquad\text{for all }v\in[0,1].
\end{align}
Then either \(Y=X\) \(\mathbb P\)-almost surely or
\(Y=1-X\) \(\mathbb P\)-almost surely.
\end{lemma}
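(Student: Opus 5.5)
The plan is to first pin down each $h_v$ exactly using the $1$-Lipschitz property, then invert the smoothing operator $K$ to recover $f_v$, and finally glue the choices over $v$ together. By Lemma~\ref{lem:repYU}, $h_v=Kf_v$ is $1$-Lipschitz with $\int_0^1 h_v=v$.

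\emph{Step 1: $h_v=g_v$ or $h_v=g_v(1-\cdot)$.} Fix $v\in(0,\frac12)$ and put $q:=q_S(v)<1$. Then $g_v(t)=(q-t)\vee 0$, so $\lambda(g_v>s)=q-s$ for $0\le s<q$, and $g_v\le q$. By \eqref{eq:smoothing_equimeasurability}, $h_v$ has the same distribution function. Hence $\max h_v=q$, attained at some $t^*$ by continuity. The $1$-Lipschitz property gives
\[
\{h_v>s\}\supseteq (t^*-(q-s),\,t^*+(q-s))\cap[0,1].
\]
This interval has measure $q-s$ for all small $s>0$ only if $t^*\in\{0,1\}$, and then the inclusion is an equality up to null sets. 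Therefore $h_v=g_v$ if $t^*=0$, and $h_v=g_v(1-\cdot)$ if $t^*=1$. For $v\in(\frac12,1)$ I would run the same argument on $1-h_v$ and $1-g_v$, using the set $\{g_v=1\}$ instead of $\{g_v=0\}$.

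\emph{Step 2: inverting $K$.} Write $\iota_v:=\1_{[0,v]}$, so $g_v=K\iota_v$ by Lemma~\ref{lem:g_v}. The substitution $s\mapsto 2-s$ together with $F_S(2-s)=1-F_S(s)$ gives $(K\tilde f)(1-t)=(Kf)(t)$ for $\tilde f(x):=f(1-x)$. The reflected case therefore reduces to the first case with $\tilde f_v$ in place of $f_v$. So suppose $Kf_v=K\iota_v$, and set $\varphi(s):=f_v(F_S(s))-\1\{s\le q\}$ for $s\in[0,2]$. Then $\int_t^{t+1}\varphi=0$ for all $t\in[0,1]$. Differentiating in $t$ gives $\varphi(t+1)=\varphi(t)$ for a.e.\ $t\in[0,1]$. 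Since $0\le f_v\le1$, we have $\varphi\ge0$ on $(q,2]$, and in particular on $[1,2]$. Hence $\varphi(t)=\varphi(t+1)\ge0$ for a.e.\ $t\in[0,1]$, so $\varphi\ge0$ a.e.\ on $[0,2]$. Combined with $\int_0^1\varphi=0$, this forces $\varphi=0$ a.e. As $F_S$ is a bi-Lipschitz-type homeomorphism preserving null sets, we get $f_v=\1_{[0,v]}$ a.e. In the reflected case we get $f_v=\1_{[1-v,1]}$ a.e. The case $v>\frac12$ is analogous, with the roles of $0$ and $1$ exchanged via $1-\varphi$.

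\emph{Step 3: consistency in $v$.} For each $v\in(0,1)\setminus\{\frac12\}$ we have $f_v\in\{\1_{[0,v]},\1_{[1-v,1]}\}$ a.e. Moreover, $v\mapsto f_v(x)$ is nondecreasing for every $x$ (the versions chosen in Lemma~\ref{lem:repYU}). Suppose $v<v'$ are taken from different branches. If $f_v=\1_{[1-v,1]}$ and $f_{v'}=\1_{[0,v']}$, monotonicity fails on $(\max\{1-v,v'\},1]$, which has positive measure unless $v'=1$. The symmetric case fails in the same way near $0$. Hence one branch holds for all $v$ in a dense set. Then $\mathbb P(Y\le v\mid X)=\1\{X\le v\}$ for a dense set of $v$, giving $Y=X$ a.s.; in the other branch $\mathbb P(Y\le v\mid X)=\1\{X\ge 1-v\}$, giving $Y=1-X$ a.s.

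I expect the main obstacle to be the rigidity in Step 1 when $v>\frac12$: there $g_v$ has a flat part at height $1$, and the level-set argument must be run on the set $\{h_v=1\}$. The inversion of $K$ in Step 2 is more delicate but becomes clean once $\varphi$ is set up as above.
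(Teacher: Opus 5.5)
Your proof is correct, but it is organized differently from the paper's.

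\textbf{The paper's route.} The paper applies Lemma~\ref{lem:uniform_rigidity} only at the median level \(v=\tfrac12\), where \(g_{1/2}(u)=1-u\) is affine on all of \([0,1]\). This pins down \(f_{1/2}=\1_{\{x<1/2\}}\) a.e., up to the reflected case. The nesting \(f_v\le f_{1/2}\) for \(v<\tfrac12\) and \(f_v\ge f_{1/2}\) for \(v>\tfrac12\) is then inserted into \((Kf)'(u)=f(F_S(u+1))-f(F_S(u))\), which shows that every \(h_v\) is nonincreasing. Equimeasurable nonincreasing continuous functions coincide, so \(h_v=g_v\) for all \(v\). Finally \(f_v\) is read off from the derivative, using the half of \(f_v\) that is already known. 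The branch is thus fixed once at \(v=\tfrac12\), and no gluing over \(v\) is needed.

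\textbf{Your route.} You prove a level-wise statement in three parts. First, your Lipschitz level-set argument extends Lemma~\ref{lem:uniform_rigidity} to the clipped-affine \(g_v\), giving \(h_v\in\{g_v,g_v(1-\cdot)\}\) for each single \(v\). Second, your periodicity-plus-sign argument shows that \(\1_{[0,v]}\) is, up to null sets, the only \([0,1]\)-valued preimage of \(g_v\) under \(K\), with no prior information on \(f_v\). Third, monotonicity in \(v\) enters only at the end, to exclude mixed branches.

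\textbf{Comparison.} The paper's route is shorter. Yours yields slightly more: each single identity \(h_v(U)\eqd g_v(U)\) with \(v\in(0,1)\) already forces \(f_v\in\{\1_{[0,v]},\1_{[1-v,1]}\}\) a.e.

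\textbf{Minor points.}
\begin{itemize}
\item For \(v>\tfrac12\), the sign argument in Step 2 runs with \(-\varphi\), not \(1-\varphi\): there \(\varphi\le0\) on \([0,q]\supseteq[0,1]\).
\item \(F_S\) is not bi-Lipschitz, since its derivative vanishes at \(0\) and \(2\). You only need that the \(1\)-Lipschitz map \(F_S\) sends null sets to null sets, and that holds.
\item The case \(v>\tfrac12\) in Step 1 is not a real obstacle. Since \(q_S(1-v)=2-q_S(v)\), one has \(1-g_v(t)=g_{1-v}(1-t)\). Hence \(1-h_v\) is a \(1\)-Lipschitz function equimeasurable with \(g_{1-v}\), and the case \(v<\tfrac12\) applies verbatim.
\end{itemize}
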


\begin{proof}
The moving-average representation of \(K\) in \eqref{int112} shows that \(Kf\) is absolutely continuous with
\begin{align}\label{eq:K_derivative}
 (Kf)'(u)=f(F_S(u+1))-f(F_S(u))
 \qquad\text{for } \lambda\text{-almost all } u\in(0,1).
\end{align}

At \(v=1/2\), the quantile function of \(S\) in \eqref{eq:q_def} satisfies \(q_S(1/2)=1\), so we obtain from \eqref{eq:g_v} that \(g_{1/2}(u)=1-u\). Hence, by \eqref{eq:smoothing_equimeasurability}, \(h_{1/2}(U)\) is uniform on \((0,1)\), and, by Lemma~\ref{lem:Markov_smoothing}, the function \(h_{1/2}\) is \(1\)-Lipschitz. Lemma~\ref{lem:uniform_rigidity} therefore gives
\begin{align}\label{eq_2hcases}
 h_{1/2}(u)=1-u\quad\text{for all }u,
 \qquad\text{or}\qquad
 h_{1/2}(u)=u\quad\text{for all }u.
\end{align}
We first consider the decreasing case in \eqref{eq_2hcases}. Equation~\eqref{eq:K_derivative} yields
\[
 h_{1/2}'(u) = f_{1/2}(F_S(u+1))-f_{1/2}(F_S(u))=-1
 \quad\text{for } \lambda\text{-almost all }u.
\]
Since \(f_{1/2}\) maps into \([0,1]\), it follows that 
\begin{align*}
    f_{1/2}(F_S(u+1)) = 0 \quad \text{and} \quad f_{1/2}(F_S(u)) = 1.
\end{align*}
Since \(F_S\) is absolutely continuous, strictly increasing, with absolutely continuous inverses, and maps \((0,1)\) and \((1,2)\) to \((0,1/2)\) and \((1/2,1)\), respectively, we obtain
\begin{align}\label{eq:median_threshold}
 f_{1/2}(x)=\1_{\{x<1/2\}}
 \qquad\text{for } \lambda\text{-almost all }x\in [0,1].
\end{align}

Fix \(v<1/2\). Since \(v\mapsto f_v(x)\) is increasing, we have \(f_v\le f_{1/2}\) and thus \(f_v=0\) almost everywhere on \((1/2,1)\). Equation~\eqref{eq:K_derivative} consequently gives
\begin{align}\label{h_vkleiner12}
    h_v'(u)=-f_v(F_S(u))\le0\quad\text{almost everywhere}.
\end{align}
For \(v>1/2\), instead \(f_v\ge f_{1/2}\), and hence \(f_v=1\) almost everywhere on \((0,1/2)\). In this case, we have
\begin{align}\label{h_vnkleiner12}
    h_v'(u)=f_v(F_S(u+1))-1\le0\quad\text{almost everywhere}.
\end{align}
From the first case in \eqref{eq_2hcases} and from \eqref{h_vkleiner12} and \eqref{h_vnkleiner12}, we conclude that \(h_v\) is nonincreasing. Since \(h_v\) and \(g_v\) are nonincreasing functions with the same distribution under Lebesgue measure due to \eqref{eq:smoothing_equimeasurability}, they agree almost everywhere. Since both functions are continuous, it follows in particular that
\begin{align}\label{eq:slice_equality_rigidity}
 h_v(u)=g_v(u)\qquad\text{for all }u,v\in[0,1].
\end{align}

We now aim to recover the conditional distribution functions \(f_v\) in \eqref{def_f_v} from the above identities. For \(v<1/2\), differentiating \(h_v\) and \(g_v\), we obtain from \eqref{h_vkleiner12}, \eqref{eq:g_v}, and  \eqref{eq:slice_equality_rigidity} that
\begin{align*}
 f_v(F_S(u))=\1_{\{u<q_S(v)\}}
 \quad\text{for }\lambda\text{-almost all }u\in(0,1).
\end{align*}
Together with \(f_v=0\) on \((1/2,1)\), this implies 
\begin{align}\label{eq_concl}
    f_v(x)=\1_{\{x<v\}} \qquad \text{for } \lambda\text{-almost all } x\in [0,1].
\end{align}
For \(v>1/2\), differentiation gives similarly
\[
 f_v(F_S(u+1))=1+g_v'(u)
 =\1_{\{u<q_S(v)-1\}}
 \quad\text{for }\lambda\text{-almost all }u\in(0,1).
\]
Combined with \(f_v=1\) on \((0,1/2)\), this gives the same conclusion \eqref{eq_concl}. The case \(v=1/2\) is covered by \eqref{eq:median_threshold}. 
Hence, for each \(v\in [0,1]\), we have \(P(Y\leq v\mid X=x) = \1_{\{x \leq v\}}\) for \(\lambda\)-almost all \(x\in (0,1)\). Since \(Y\) and \(X\) are uniform on \((0,1)\), standard arguments yield \(Y = X\) \(\mathbb{P}\)-almost surely.

Finally, for the second case in \eqref{eq_2hcases}, let \(h_{1/2}(u)=u\) and set
\[
 \widetilde U=1-U,\qquad \widetilde R=1-R,\qquad
 \widetilde X=1-X.
\]
Since \(F_S(2-s)=1-F_S(s)\), we have \(\widetilde X=F_S(\widetilde U+\widetilde R)\). Moreover, \(\mathbb P(Y\le v\mid \widetilde U=u)=h_v(1-u)\), so \(\mathbb P(Y\le v\mid \widetilde U)\eqd g_v(U)\) for every \(v\in[0,1]\). In particular, 
\(\mathbb P(Y\le 1/2\mid \widetilde U=u) =h_{1/2}(1-u)=1-u. \) Thus the first case applies to \((\widetilde U,\widetilde R,\widetilde X,Y)\), and hence \(Y=\widetilde X=1-X\) \(\mathbb{P}\)-almost surely.
\end{proof}

We can now prove our main result. 

\begin{proof}[Proof of Theorem~\ref{thm:main}]
Fix \(C\in\cC\) and consider the coupling
\((U,V,X,Y)\) constructed in
\eqref{eq_constrX}--\eqref{eq:V_cond_transform}.
By Lemma~\ref{lem_coupling}, we have
\[
    (U,V)\sim\Pi
    \qquad\text{and}\qquad
    (X,Y)\sim C.
\]
Hence, the joint law of \((U,V,X,Y)\) defines an admissible
transport coupling from \(\Pi\) to \(C\). Then,
Corollary~\ref{cor_transcost} yields
\begin{align}\label{est_WSES}
\begin{split}
    \mathcal{W}_2^2(\Pi,C)
    &\leq \E\bigl[(U-X)^2]+\E[(V-Y)^2\bigr]\\
    &\leq \E[(U-X)^2]+\E[(R-X)^2]
    =\frac{1}{20}+\frac{1}{20}
     =\frac{1}{10},
\end{split}
\end{align}
where the last equality follows from
Corollary~\ref{cor:half_cost}. This proves
\eqref{eq:main_intro}.

It remains to characterize the equality cases in
\eqref{est_WSES}. By Proposition~\ref{lem:diag_transport},
equality is attained by \(M\) and \(W\).

Conversely, suppose that
\[
    \mathcal{W}_2^2(\Pi,C)=\frac{1}{10}.
\]
Since the transport coupling \((U,V)\mapsto (X,Y)\) has cost at most
\(1/10\), the inequalities in \eqref{est_WSES} must be equalities.
In particular,
\begin{align}\label{eq:cost_equality_main}
    \E[(V-Y)^2]
    =
    \E[(R-X)^2]
    =
    \frac{1}{20}.
\end{align}

By Proposition~\ref{lem:ccx_comparison}, we have
\((Y,U)\preccurlyeq_{\mathrm{ccx}}(X,U)\).
Together with Lemma~\ref{lem_coupling}\,\ref{lem_coupling5} and
Proposition~\ref{lem_2_8}, this yields
\begin{align}\label{eq:sm_equality_main}
    (Y,V)
    =
    (F_{Y|U}^{-1}(V),V)
    \geq_{\mathrm{sm}}
    (F_{X|U}^{-1}(R),R)
    =
    (X,R).
\end{align}
Combining \eqref{eq:cost_equality_main} with
Lemma~\ref{lem:J_lower_orthant}, we obtain
\begin{align}\label{eq:YV_XR}
    (Y,V)\eqd(X,R).
\end{align}
Now fix \(v\in[0,1]\). Then we obtain
\begin{align*}
    \mathbb P(Y\leq v,V\leq w) &= \E\left[\mathbb{P}\bigl(F_{Y|U}^{-1}(V)\leq v,V\leq w \mid U\bigr)\right] 
    = \E\left[\mathbb{P}(V \leq h_v(U),V\leq w \mid U)\right] \\
    &= \E\left[\mathbb{P}(V\leq \min\{h_v(U),w\} \mid U)\right]
    =
    \E\bigl[\min\{h_v(U),w\}\bigr],
\end{align*}
where we use Lemma \ref{lem_coupling}\,\ref{lem_coupling5} for the first equality. The second equality follows with the representation of \(h_v\) in \eqref{eq_rep_hv42} and the identity \(F^{-1}(a)\leq v\) \(\Longleftrightarrow\) \(a\leq F(v)\) for any distribution function \(F\). For the last equality, we use that \(V\) is uniform on \((0,1)\) and independent of \(U\) by Lemma \ref{lem_coupling}\,\ref{lem_coupling1}.\\
Similarly, we obtain
\begin{align*}
    \mathbb P(X\leq v,R\leq w)
    &= \E\!\left[
        \mathbb P\bigl(F_{X|U}^{-1}(R)\leq v,R\leq w\mid U\bigr)
       \right] 
    = \E\!\left[
        \mathbb P\bigl(R\leq g_v(U),R\leq w\mid U\bigr)
       \right] \\
    &= \E\!\left[
        \mathbb P\bigl(R\leq \min\{g_v(U),w\}\mid U\bigr)
       \right] 
    = \E\bigl[\min\{g_v(U),w\}\bigr].
\end{align*}
Therefore, \eqref{eq:YV_XR} implies
\begin{align}\label{eq:min_equality_main}
    \E\bigl[\min\{h_v(U),w\}\bigr]
    =
    \E\bigl[\min\{g_v(U),w\}\bigr]
    \qquad\text{for all }w\in[0,1].
\end{align}
Since \(\E[h_v(U)]
    =
    \mathbb P(Y\leq v)
    =
    v
    =
    \mathbb P(X\leq v)
    =
    \E[g_v(U)],
\)
the stop-loss characterization of the convex order (see e.g. \citet[Theorem~3.A.1]{Shaked-Shanthikumar-2007}) yields
\[
    h_v(U)\eqd g_v(U)
    \qquad\text{for all }v\in[0,1].
\]
Lemma~\ref{lem:smoothing_rigidity} therefore gives
\[
    Y=X
    \qquad\text{or}\qquad
    Y=1-X
    \qquad \mathbb P\text{-almost surely}.
\]
Since \((X,Y)\sim C\), the first case corresponds to \(C=M\), whereas
the second corresponds to \(C=W\). This completes the proof.
\end{proof}

\begin{proof}[Proof of Corollary \ref{cor:main1}.]
By Theorem~\ref{thm:main}, we have \(0\leq \mathcal W_2(C,\Pi)\leq \frac{1}{\sqrt{10}},\)
which proves (i). Moreover, since \(\mathcal W_2\) is a metric,
\[
    \mathfrak D_{\mathcal W}(X,Y)=0
    \quad\Longleftrightarrow\quad
    C=\Pi.
\]
Since \(X\) and \(Y\) have continuous marginals, Sklar's theorem \cite[Theorem~2.3.3]{Nelsen-2006} yields
\(C=\Pi\) if and only if \(X\) and \(Y\) are independent, proving (ii).

Finally, Theorem~\ref{thm:main} gives
\[
    \mathfrak D_{\mathcal W}(X,Y)=1
    \quad\Longleftrightarrow\quad
    C\in\{M,W\}.
\]
For continuous marginals, \(C=M\) is equivalent to comonotonicity of
\(X\) and \(Y\), whereas \(C=W\) is equivalent to countermonotonicity.
This proves (iii).
\end{proof}

\begin{proof}[Proof of Corollary~\ref{cor_main}]
By Proposition~\ref{lem:diag_transport}, the map
\[
    T(u,r)=\bigl(F_S(u+r),F_S(u+r)\bigr)
\]
is an optimal Monge transport from \(\Pi\) to \(M\). It remains to verify that \(T\) is the gradient of the convex potential in
\eqref{eq:Brenier_potential}.

Writing \(\Phi(u,r)=\psi(u+r)\), direct differentiation of
\eqref{eq:Brenier_potential} gives
\(\psi'(s)=F_S(s)\) for all \(s\in[0,2]\),
where \(F_S\) is given by \eqref{eq:FS}. Hence
\[
    \nabla\Phi(u,r)
    =
    \bigl(\psi'(u+r),\psi'(u+r)\bigr)
    =
    \bigl(F_S(u+r),F_S(u+r)\bigr)
    =
    T(u,r).
\]
Since \(F_S\) is nondecreasing, \(\psi\) is convex, and therefore
\(\Phi(u,r)=\psi(u+r)\) is convex on \([0,1]^2\). Thus \(T=\nabla\Phi\)
is the Brenier map from \(\Pi\) to \(M\).
\end{proof}


\section*{Acknowledgements}

The author thanks Stefan Schrott for presenting this problem as an open question at the DMV Annual Meeting 2026 in Konstanz and thereby bringing it to his attention.
For assistance with exposition and technical checks, a large language model was used. The proof strategy is due to the author, who independently verified all mathematical arguments. This research was funded in whole by the Austrian Science Fund (FWF) {[10.55776/PAT1669224]} project \emph{Stochastic orders for functional dependence}.

\bibliographystyle{plainnat}
\bibliography{Max_Wasserstein_Distance}

\end{document}